\documentclass[preprint]{elsarticle}
\journal{Journal of Number Theory}
\usepackage{amsfonts, amssymb, amsmath, amsthm}
\usepackage{url, centernot}
\usepackage{ctable}

\newtheorem{thm}{Theorem}

\newtheorem{prop}{Proposition}

\newtheorem{cor}{Corollary}

\newtheorem{lem}{Lemma}

\theoremstyle{definition}
\newtheorem*{defn}{Definition}

\newcommand{\thmref}[1]{Theorem \ref{#1}}
\newcommand{\propref}[1]{Proposition \ref{#1}}
\newcommand{\cororef}[1]{Corollary \ref{#1}}
\newcommand{\lemref}[1]{Lemma \ref{#1}}

\newcommand{\thickmidrule}{\midrule[\heavyrulewidth]}

\renewcommand{\emptyset}{\ensuremath{\varnothing}}
\renewcommand{\phi}{\varphi}
\makeatletter
\newcommand{\imod}[1]{\allowbreak\mkern10mu({\operator@font mod}\,\,#1)}
\newcommand{\jmod}[1]{\allowbreak\mkern6mu({\operator@font mod}\,\,#1)}
\makeatother
\newcommand{\deq}{\mathrel{\mathop:}=}
\newcommand{\divides}{\mathrel{\mid}}
\newcommand{\notdivides}{\mathrel{\centernot\mid}}

\newcommand{\Z}{\ensuremath{\mathbb{Z}}}
\newcommand{\N}{\ensuremath{\mathbb{N}}}
\newcommand{\Q}{\ensuremath{\mathbb{Q}}}
\newcommand{\R}{\ensuremath{\mathbb{R}}}
\newcommand{\C}{\ensuremath{\mathbb{C}}}

\renewcommand{\Re}{\mathop{\mathrm{Re}}}
\renewcommand{\Im}{\mathop{\mathrm{Im}}}
\DeclareMathOperator{\sign}{sign}
\DeclareMathOperator{\Res}{Res}
\DeclareMathOperator{\Log}{Log}

\newcommand{\Li}[1]{\mathop{\mathrm{Li}_{#1}}}
\newcommand{\RR}[1]{\mathop{\mathfrak{R}_{#1}}}
\newcommand{\logp}{\mathop{\log^+\!}}
\renewcommand{\O}{\mathop{O}_{}}
\newcommand{\Ok}{\mathop{O_k}}
\newcommand{\Osub}[1]{\mathop{O_{#1}}}
\renewcommand{\o}{\mathop{o}}
\newcommand{\tildeD}[1]{\mathop{\tilde{D}^{#1}}}
\newcommand{\h}[2]{\mathop{H^{#1}_{#2}}\!}
\providecommand{\abs}[1]{\lvert#1\rvert}
\providecommand{\norm}[1]{\lVert#1\rVert}

\newcommand{\struta}[1]{\rule{0pt}{#1}}
\newcommand{\strutb}[1]{\rule[-#1]{0pt}{0.5mm}}
\newcommand{\eval}[2]{\left[#1\right]_{\partial #2}}
\newcommand{\stirone}[2]{\Bigl[\genfrac{}{}{0pt}{}{#1}{#2}\Bigr]}
\newcommand{\stirtwo}[2]{\Bigl\{\genfrac{}{}{0pt}{}{#1}{#2}\Bigr\}}
\newcommand{\stironesm}[2]{{\textstyle \genfrac{[}{]}{0pt}{}{#1}{#2}}}
\newcommand{\stirtwosm}[2]{{\textstyle \genfrac{\{}{\}}{0pt}{}{#1}{#2}}}
\newcommand{\quasi}{\overset{*}{\sim}}
\newcommand{\UC}{\mathbb{T}}
\newcommand{\Pyinline}{{\partial P/\partial y}}

\newcommand{\rtfnset}{R}
\newcommand{\rtfnsettor}{\rtfnset_{\mathrm{tor}}}
\newcommand{\rtfnsetnon}{\rtfnset_{\mathrm{non}}}
\newcommand{\ipm}{I^{\pm}_{n,m}}
\newcommand{\jpmk}{J^{\pm}_{n,m,k}}
\newcommand{\jpmzero}{J^{\pm}_{n,m,0}}
\newcommand{\Spm}{S^{\pm}_{n,m,r}}
\newcommand{\Spmone}{S^{\pm}_{n,m,1}}
\newcommand{\slicepoint}{\alpha_{0}}

\setcounter{thm}{-1}
\setcounter{prop}{-1}


\begin{document}

\begin{frontmatter}

\title{Asymptotic expansion of the difference of two Mahler measures}

\author{John D.~Condon \fnref{currentaddress}}


\address{Department of Mathematics \\
	Amherst College\\
        Amherst, MA 01002 USA}

\fntext[currentaddress]{Current address: 807 Buchanan St NW, Washington, DC 20011, USA}

\ead{jconecker@gmail.com}

\ead[url]{http://jconecker.wordpress.com}

\begin{abstract}
We show that for almost every polynomial $P(x,y)$ with complex coefficients, the difference of the logarithmic Mahler measures of $P(x,y)$ and $P(x,x^n)$ can be expanded in a type of formal series similar to an asymptotic power series expansion in powers of $1/n$.  This generalizes a result of Boyd.  We also show that such an expansion is unique and provide a formula for its coefficients.  When $P$ has algebraic coefficients, the coefficients in the expansion are linear combinations of polylogarithms of algebraic numbers, with algebraic coefficients.
\end{abstract}

\begin{keyword}
Mahler measure \sep asymptotic expansions \sep polylogarithms
\end{keyword}

\end{frontmatter}



\section{Introduction}\label{sec:intro}

For a nonzero Laurent polynomial $P\in\C\bigl[x_1^{\pm 1},\dotsc x_n^{\pm 1}\bigr]$, the \emph{(logarithmic) Mahler measure} of $P$ is defined as
\begin{equation}\label{eq:multivar}
  \begin{split}
  m(P) &= \int_{0}^{1}\dotsi \int_{0}^{1}\log\bigl|P \bigl(\exp(2\pi i t_{1}),\ldots,\exp(2\pi i t_{n})\bigr)\bigr|
        	    \, d t_{1}\cdots d t_{n} \\[2mm]
        &= \frac{1}{(2\pi i)^n}\int_{\UC^n}\log\bigl|P(x_1,\ldots,x_n)\bigr|
            \,\frac{dx_1}{x_1}\cdots\frac{dx_n}{x_n},
   \end{split}
\end{equation}
where $\UC$ is the unit circle in $\C$, oriented counter-clockwise.  This integral is always finite, even if the zero set of $P$ intersects $\UC^n$.  

When $n=1$, Jensen's formula implies that if $P(x)=a\prod_{j=1}^d (x-\alpha_j)$, then
\begin{equation}\label{eq:onevar}
  m(P) = \log(a) + \sum_{j=1}^{d} \logp \abs{\alpha_{j}},
\end{equation}
where, for $r>0$, $\logp(r)\deq \log\bigl(\max\{r, 1\}\bigr)$.
The latter construct (or actually, its exponential) was first studied by D. H. Lehmer \cite{lehmer} in the 1930s.  Mahler introduced \eqref{eq:multivar} three decades later \cite{mahler}.

Boyd \cite{boyd2} established the following connection (generalized by Lawton \cite{lawton}) between multivariable and single-variable Mahler measure values.

\begin{thm}[Boyd]\label{thm:boyd}
  For any nonzero Laurent polynomial $P(x,y)$ with complex coefficients,
\[
m\bigl(P(x,y)\bigr) = \lim_{n\to\infty} m\bigl(P(x,x^n)\bigr).
\]
\end{thm}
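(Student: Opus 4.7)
The plan is to apply Jensen's formula in each variable separately, reducing both $m\bigl(P(x,y)\bigr)$ and $m\bigl(P(x,x^n)\bigr)$ to one-dimensional integrals over $\UC$ involving the $y$-branches of $P$, and then deduce the limit from a Riemann-sum / equidistribution argument. Writing $P$ as a polynomial in $y$ with coefficients in $\C[x^{\pm 1}]$ and factoring over $\C(x)$,
$$P(x,y) = a_d(x)\prod_{j=1}^{d}\bigl(y-y_j(x)\bigr),$$
where the $y_j$ are algebraic functions of $x$ well-defined on $\UC$ off a finite branch set. Applying Jensen in $y$ at each fixed $x\in\UC$ and integrating over $x$ yields
$$m\bigl(P(x,y)\bigr) = m(a_d) + \sum_{j=1}^{d}\int_{0}^{1}\logp\abs{y_j(e^{2\pi i t})}\,dt,$$
while substituting $y=x^n$ in the factorization and taking $\log|\cdot|$ termwise gives
$$m\bigl(P(x,x^n)\bigr) = m(a_d) + \sum_{j=1}^{d}\int_{0}^{1}\log\abs{e^{2\pi i n s} - y_j(e^{2\pi i s})}\,ds.$$
The theorem therefore reduces, branch by branch, to showing
$$\int_{0}^{1}\log\abs{e^{2\pi i n s} - y_j(e^{2\pi i s})}\,ds \;\longrightarrow\; \int_{0}^{1}\logp\abs{y_j(e^{2\pi i t})}\,dt.$$

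For a single branch I would split $[0,1]$ into the $n$ arcs $[k/n,(k+1)/n]$ and substitute $u=ns\bmod 1$ to rewrite the left-hand side as
$$\int_{0}^{1}\Biggl[\frac{1}{n}\sum_{k=0}^{n-1}\log\abs{e^{2\pi i u}-y_j\bigl(e^{2\pi i (u+k)/n}\bigr)}\Biggr]du.$$
The bracketed expression is a Riemann sum for $\int_{0}^{1}\log\abs{e^{2\pi i u}-y_j(e^{2\pi i t})}\,dt$, and that inner integral, being the one-variable Mahler measure (in $w$) of the linear polynomial $w-y_j(e^{2\pi i t})$, equals $\logp\abs{y_j(e^{2\pi i u})}$ by a second application of Jensen's formula. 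Passing to the limit inside the outer integral would then close the argument.

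The main technical obstacle is the logarithmic singularity: when $e^{2\pi i u}$ is close to one of the values $y_j\bigl(e^{2\pi i (u+k)/n}\bigr)$, an individual term of the Riemann sum can be arbitrarily negative, so neither uniform nor pointwise convergence is automatic. The remedy is an equi-integrability estimate: combining the $L^1(\UC^2)$-integrability of $\log\abs{e^{2\pi i u}-y_j(e^{2\pi i t})}$ (which itself follows from the local integrability of $\log\abs{P}$ near its zero set) with a quantitative equidistribution bound for the $n$ points $\bigl\{(u+k)/n:0\le k<n\bigr\}$ on $\UC$, one controls the exceptional contribution uniformly in $n$ and passes to the limit via dominated convergence. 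Small neighbourhoods of the finitely many branch points of the $y_j$ on $\UC$, where the factorization breaks down, must be excised and treated separately, but their contribution is also uniformly small by the same integrability.
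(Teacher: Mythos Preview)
The paper does not prove this theorem; it is quoted from Boyd \cite{boyd2} (with the multivariable extension attributed to Lawton \cite{lawton}), so there is no in-paper proof to compare against. I will therefore comment directly on your sketch.

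The overall shape---Jensen in $y$, then a Riemann-sum/equidistribution argument, with an $L^1$ control for the logarithmic singularity---is the standard line, and your discussion of the obstacle is appropriate. But there is a genuine slip at the key step. After the substitution $u=ns\bmod 1$ you reach
\[
\int_{0}^{1}\frac{1}{n}\sum_{k=0}^{n-1}\log\bigl|e^{2\pi i u}-y_j\bigl(e^{2\pi i(u+k)/n}\bigr)\bigr|\,du,
\]
and you then claim that for fixed $u$ the limiting inner integral
\[
g(u)\;:=\;\int_{0}^{1}\log\bigl|e^{2\pi i u}-y_j(e^{2\pi i t})\bigr|\,dt
\]
equals $\logp\abs{y_j(e^{2\pi i u})}$ ``by Jensen''. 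This is not correct. Jensen's formula says $\int_{0}^{1}\log\abs{e^{2\pi i\theta}-c}\,d\theta=\logp\abs{c}$, where the \emph{integration variable} parametrizes the unit circle and $c$ is held fixed. In $g(u)$ the integration is over $t$, which moves the argument of $y_j$; the curve $t\mapsto y_j(e^{2\pi i t})$ does not trace $\UC$, while the point $e^{2\pi i u}$ that \emph{does} lie on $\UC$ is held fixed. So $g(u)$ is not a one-variable Mahler measure of anything, and in general $g(u)\ne\logp\abs{y_j(e^{2\pi i u})}$.

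The repair is easy: do not try to evaluate $g(u)$ pointwise. After the Riemann-sum limit you have $\int_{0}^{1}g(u)\,du=\int_{0}^{1}\!\int_{0}^{1}\log\abs{e^{2\pi i u}-y_j(e^{2\pi i t})}\,dt\,du$; apply Fubini to bring the $du$ inside and \emph{then} apply Jensen in $u$ (with $c=y_j(e^{2\pi i t})$ fixed) to obtain $\int_{0}^{1}\logp\abs{y_j(e^{2\pi i t})}\,dt$, which is what you want. Equivalently---and this is how most presentations arrange it---skip the branch decomposition entirely and run the same substitution directly on $\log\abs{P(e^{2\pi i s},e^{2\pi i n s})}$; the Riemann sum then approximates $\int_{0}^{1}\log\abs{P(e^{2\pi i t},e^{2\pi i u})}\,dt$ for fixed $u$, and integrating over $u$ gives $m\bigl(P(x,y)\bigr)$ with no branch-point excision needed.
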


Also in \cite{boyd2}, Boyd proved the following result, which shows the rate at which the above limit converges in the case of $P(x,y)=1+x+y$:
\begin{prop}[Boyd]\label{prop:boydasymp}
For all positive integers $n$,
\begin{equation}\label{eq:boydexpn}
  m(1+x+x^n) - m(1+x+y) = \frac{c(n)}{n^2} + \O\biggl(\frac{1}{n^3}\biggr),
\end{equation}
where $c(n)$ depends only on $n$ mod 3:
\[
c(n)=\begin{cases}
    \phantom{-}\sqrt{3} \pi/18 & \mathrm{if}\ n\equiv 0,1 \imod{3} \\[2 mm]
    -\sqrt{3} \pi/6 & \mathrm{if}\ n\equiv 2 \imod{3}. \\[2 mm]
    \end{cases}
\]
\end{prop}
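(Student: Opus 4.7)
The plan is to reduce both Mahler measures to one-variable integrals on $\UC$ parametrized by $s$ (with $x = e^{2\pi is}$), and then compare them as oscillatory integrals in $n$. Jensen's formula applied in the variable $y$ gives
\[
m(1+x+y) = \int_0^1 \logp\!|1+e^{2\pi is}|\,ds = \int_{-1/3}^{1/3}\log|1+e^{2\pi is}|\,ds,
\]
since $|1+e^{2\pi is}| = 2|\cos(\pi s)|$ is at least $1$ exactly when $|s|\leq 1/3$. Meanwhile $m(1+x+x^n) = \int_0^1 \log|1+e^{2\pi is}+e^{2\pi ins}|\,ds$. I would split $[-1/2,1/2]$ into the regions $R_1 = (-1/3,1/3)$ and $R_2 = (1/3,2/3)$. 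On $R_1$ factor $1+e^{2\pi is}$ out of $1+e^{2\pi is}+e^{2\pi ins}$; on $R_2$ factor out $e^{2\pi ins}$. In each case the remaining factor lies strictly inside the unit disk (away from the corners $\pm 1/3$), so $\log|{\cdot}|$ expands as $\Re\sum_{k\geq 1}(-1)^{k+1}z^k/k$.

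Subtracting $m(1+x+y)$ and (initially formally) interchanging sum and integral yields
\[
m(1+x+x^n) - m(1+x+y) = \Re\sum_{k\geq 1}\frac{(-1)^{k+1}}{k}\int_{-1/2}^{1/2} F_k(s)\,e^{2\pi inks}\,ds,
\]
where $F_k(s)=(1+e^{2\pi is})^{-k}$ on $R_1$ and $F_k(s)=(1+e^{-2\pi is})^k$ on $R_2$. The key structural observation is that $1+e^{\pm 2\pi i/3} = e^{\pm i\pi/3}$ has absolute value $1$, so the two pieces of $F_k$ agree at the corners, making $F_k$ continuous on $\UC$; however, $F_k'$ has jumps at $\pm 1/3$. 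Two integrations by parts then produce the standard asymptotic for Fourier coefficients of a $C^0$ piecewise-smooth function,
\[
\int_{-1/2}^{1/2} F_k(s)\,e^{2\pi inks}\,ds = \frac{1}{(2\pi ink)^2}\sum_{s_0=\pm 1/3}\Delta F_k'(s_0)\,e^{2\pi inks_0} + O_k\!\left(\frac{1}{n^3}\right),
\]
and an explicit computation gives $\Delta F_k'(\pm 1/3) = -2\pi\sqrt{3}\,k\,e^{\mp ik\pi/3}$. Substituting and taking real parts, the leading-order contribution is
\[
\frac{\sqrt{3}}{\pi n^2}\sum_{k\geq 1}\frac{(-1)^{k+1}}{k^2}\cos\!\left(\frac{(2n-1)k\pi}{3}\right),
\]
whose summand depends on $n$ only through $n \imod 3$. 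The series can be summed in closed form by the Fourier identity $\sum_{k\geq 1}k^{-2}\cos(k\theta) = \pi^2/6 - \pi\theta/2 + \theta^2/4$ on $[0,2\pi]$ (applied after the shift $\theta \mapsto \theta+\pi$ that absorbs $(-1)^{k+1}$); case analysis then gives $\pi^2/18$ when $n\equiv 0,1 \imod 3$ and $-\pi^2/6$ when $n\equiv 2 \imod 3$, which matches the stated $c(n)$.

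The step I expect to be the main obstacle is making the error $O(1/n^3)$ rigorous once everything is summed over $k$. On $R_2$, $\|F_k\|_\infty$ grows like $2^k$ and differentiating introduces further factors of $k$, so the $O_k(n^{-3})$ remainder from two integrations by parts is not summable termwise. To get around this I would avoid interchanging sum and integral globally: truncate the series at some threshold $K=K(n)$ tending slowly to infinity, apply the integration-by-parts expansion to the finite head (where $k$-dependent constants are harmless), and control the tail $k>K$ by the direct estimate $|\log|1+z||\leq |z|/(1-|z|)$, which is uniform on any compact subinterval of $R_1\cup R_2$ away from the corners, together with a separate local analysis in shrinking neighborhoods of $\pm 1/3$. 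An additional wrinkle arises precisely when $n\equiv 2 \imod 3$: then $1+x+x^n$ actually vanishes on $\UC$ at $x=e^{\pm 2\pi i/3}$, giving integrable logarithmic singularities in the integrand, which is geometrically the reason $c(n)$ takes a different (and larger) value in that residue class.
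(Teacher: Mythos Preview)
Your route is essentially Boyd's original direct argument. The paper does not reproduce that argument; it instead derives the proposition as a special case of the general Theorem~1 (via Corollary~2), computing $c_2(n)=-\frac{\sqrt{3}}{\pi}\Re\bigl(\Li{2}(\xi^{n+1})\bigr)$ and then matching it to Boyd's $c(n)$ using $\Re\bigl(\Li{2}(\xi^{\pm 1})\bigr)=-\pi^2/18$. So the two approaches are genuinely different in scope: yours is elementary and tailored to $1+x+y$, while the paper builds heavy machinery (root functions, the polynomials $\Psi_{r,a}$, Lemmas~4--6) and specializes at the end. Your approach gets this one result with far less overhead; the paper's buys the full a.p.p.s.\ expansion for every $r$ and for general $P$.

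One correction, though: your diagnosis of the summability obstacle is off. You say $\|F_k\|_\infty$ grows like $2^k$ on $R_2$, but in fact $|F_k|\le 1$ everywhere---on $R_2$ you have $|1+e^{-2\pi is}|\le 1$, and on $R_1$ you have $|1+e^{2\pi is}|\ge 1$, so both pieces are bounded by $1$. The genuine difficulty is that the sup-norm bound $\|F_k^{(j)}\|_\infty=O(k^j)$ (sharp near the corners $\pm 1/3$) makes the remainder after repeated integration by parts $O(n^{-3})$ \emph{uniformly in $k$}, and $\sum_k k^{-1}\cdot O(n^{-3})$ diverges. The paper's clean fix is exactly its Lemma~5: because $|F_k|=1$ only at the isolated points $\pm 1/3$, one has $\int|F_k^{(j)}(s)|\,ds=O(k^{j-\delta})$ for some $\delta>0$, and that extra $k^{-\delta}$ decay makes the tail summable without truncation. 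Your truncate-at-$K(n)$ plan can be pushed through, but you would be reinventing this $L^1$ estimate in disguise; it is the natural tool here.
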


Motivated by these results, we examine the difference between these Mahler measures.
\begin{defn}
For a nonzero Laurent polynomial $P(x,y)$ and a positive integer $n$, let $\Delta_{n}(P) \deq m\bigl(P(x,x^n)\bigr)-m\bigl(P(x,y)\bigr)$.
\end{defn}

The right side of \eqref{eq:boydexpn} could be thought of as the beginning of a formal series for $\Delta_{n}(1+x+y)$ of the form $\sum_{k=2}^\infty c_k(n)/n^k$.  We will find such an expression for $\Delta_{n}(P)$, for $P=1+x+y$ as well as many other two-variable polynomials.

Such a formal series cannot quite be called an asymptotic power series in $n$, in the sense of \cite{erdelyi}, in that the coefficients in such a series should be independent of $n$.  But our coefficients will have a structure that will, in particular, make them bounded as functions of $n$, occasionally depending only on $n \jmod{m}$ for some integer $m$.


\section{Statement of results}

Unless stated otherwise, all variables and functions are complex-valued.  $\N$ will denote the set of positive integers.  For $P\in\C[z_1,\ldots,z_n]$, let $Z(P)$ denote the affine zero set of $P$ in $\C^{n}$.

$\Li{k}(z)$ will denote the principal branch of the $k$-th polylogarithm function \cite{lewin}.  For $k\ge 2$ (which is all we will need) and $\abs{z}\le 1$, this is given by
\[
\Li{k}(z)  = \sum_{n=1}^{\infty} \frac{z^n}{n^k}.
\]

\begin{defn}
We will say that a function $\omega:\R\to\R$ is \emph{quasiperiodic} if it is the sum of finitely many continuous, periodic functions.
\end{defn}

Quasiperiodic functions are clearly bounded (although this is no longer true if the summand functions are not assumed to be continuous \cite{keleti}).

\begin{defn}
For a function $f:\N\to\R$, we will say that a formal series $\sum_{r=0}^\infty c_r(n)/n^r$ is an \emph{asymptotic pseudo-power series} (or \emph{a.p.p.s.}) \emph{expansion} of $f(n)$ (in powers of $1/n$, as $n\to\infty$) if, for each nonnegative integer $r$, $c_r(n)$ is the restriction to the nonnegative integers of a quasiperiodic function of $n$, and if for all postive integers $n$ and $k$,
\[
f(n)=\sum_{r=0}^{k-1} \frac{c_r(n)}{n^r} + \Osub{f,k}\biggl(\frac{1}{n^{k}}\biggr).
\]
(The subscripts on ``$\O$'' indicate that the implied constant depends on those subscripts).
We will denote this by writing $f(n) \quasi \sum_{r=0}^{\infty} c_r(n)/n^r$.
%
\end{defn}

Asymptotic power series expansions in powers of $1/n$, as defined in \cite{erdelyi}, are the same as a.p.p.s.\ expansions in which the coefficients $c_r(n)$ do not depend on $n$.  We will refer to these as \emph{true} asymptotic power series expansions, for contrast.


True asymptotic power series expansions for a function are uniquely determined by that function.  For our series, quasiperiodicity of the coefficients is enough to rescue uniqueness.
\begin{prop}\label{prop:uniqueness}
  If a function $f:\N\to\R$ has an a.p.p.s.\ expansion in powers of $1/n$, that expansion is unique.
\end{prop}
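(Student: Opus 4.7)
My plan is to argue coefficient by coefficient. Suppose $f$ admits two a.p.p.s.\ expansions $\sum_r c_r(n)/n^r$ and $\sum_r c_r'(n)/n^r$, and set $d_r \deq c_r - c_r'$, which is itself quasiperiodic. Subtracting the two expansions gives, for every positive integer $k$,
\[
\sum_{r=0}^{k-1} \frac{d_r(n)}{n^r} = \Osub{k}\!\biggl(\frac{1}{n^{k}}\biggr).
\]
I would then induct on $r$: assuming $d_0,\ldots,d_{r-1}$ all vanish on $\N$, the $k=r+1$ instance of this bound collapses to $d_r(n)=\Osub{r+1}(1/n)$, so in particular $d_r(n)\to 0$ as $n\to\infty$ through positive integers. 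The whole proof therefore reduces to the following key lemma: \emph{any quasiperiodic function $\omega:\R\to\R$ with $\omega(n)\to 0$ as $n\to\infty$ must vanish at every positive integer}.

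To prove this lemma I would fix a representation $\omega=\sum_{j=1}^J\omega_j$ with each $\omega_j$ continuous of period $T_j>0$, and factor $\omega$ through the compact group $G\deq\prod_{j=1}^J\R/T_j\Z$ by writing $\omega=\Psi\circ\Phi$, where $\Phi:\R\to G$ sends $x\mapsto(x\bmod T_1,\ldots,x\bmod T_J)$ and $\Psi:G\to\R$ is the continuous map $(s_1,\ldots,s_J)\mapsto\sum_j\omega_j(s_j)$. Set $H\deq\overline{\Phi(\N)}\subseteq G$. The heart of the argument is showing that every $p\in H$ is the limit of some sequence $\Phi(n_k)$ with $n_k\to\infty$ in $\N$; granted this, continuity of $\Psi$ combined with $\omega(n_k)\to 0$ forces $\Psi(p)=0$, so $\Psi\equiv 0$ on $H$, and therefore $\omega(n)=\Psi(\Phi(n))=0$ for every $n\in\N$, closing the induction.

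The main technical obstacle is that claim, since $p\in\overline{\Phi(\N)}$ a priori only yields \emph{some} approximating sequence, which could be bounded if $p=\Phi(n_0)$ happens to be isolated in $\Phi(\N)$. The remedy is to exploit the group structure of $\Phi(\Z)\subset G$ together with compactness of $G$. From any strictly increasing sequence $n_1<n_2<\cdots$ in $\N$, compactness extracts a convergent subsequence $\Phi(n_{i_k})$, and the differences $N_k\deq n_{i_{k+1}}-n_{i_k}\in\N$ then satisfy $\Phi(N_k)\to 0$ in $G$; either the $N_k$ are unbounded, so passing to a further subsequence gives $N_k\to\infty$ with $\Phi(N_k)\to 0$, or pigeonhole yields an $N_0\in\N$ with $\Phi(N_0)=0$ and then $\Phi(mN_0)=0$ for all $m\in\N$. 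Either way, there exist arbitrarily large $N\in\N$ with $\Phi(N)$ arbitrarily close to the identity of $G$; translating any approximating sequence for $p$ by such shifts pushes it to infinity without disturbing its limit. With this shift lemma in hand, the rest of the proof is routine.
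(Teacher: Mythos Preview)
Your argument is correct and follows the same architecture as the paper's: reduce to showing that a quasiperiodic $\omega$ with $\omega(n)\to 0$ along $\N$ must vanish on $\N$, then induct coefficient by coefficient. The only substantive difference is in how you produce the ``near-period'' integers: the paper simply cites simultaneous Diophantine approximation (Cassels) to obtain an unbounded sequence $m_j\in\N$ with $m_j\to 0$ in every $\R/\alpha_i\Z$, and then observes directly that $\omega(a)=\lim_j\omega(a+m_j)=\lim_{n\to\infty}\omega(n)$ for each fixed $a$; you instead rederive this shift sequence from compactness of the torus $G=\prod_j\R/T_j\Z$ and a pigeonhole on differences, and pass through the closure $H=\overline{\Phi(\N)}$, which is slightly more than is needed. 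Your route is self-contained but longer; the paper's is shorter but leans on an external reference---the underlying idea is the same.
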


The following is our main result.

\begin{thm}\label{thm:main}
  Let $P(x,y)\in\C[x,y]$ be such that $P$ and $\Pyinline$ do not have a common zero on $\UC\times\C$.  Then $\Delta_{n}(P) = m\bigl(P(x,x^n)\bigr)-m\bigl(P(x,y)\bigr)$ has an a.p.p.s.\ expansion in powers of $1/n$.
Specifically, if $P$ is absolutely irreducible, then $\Delta_{n}(P) \quasi \sum_{r=2}^{\infty} c_r(n)/n^r$, with coefficients
\begin{equation}\label{eq:maincoeffformula}
  c_r(n)
    = \frac{1}{\pi}\sum_{(\alpha,\beta)\in \mathcal{E}} s(\alpha,\beta)
    		\sum_{a=2}^{r} \RR{a+1}\bigl(\Omega_{P,r,a}(\alpha,\beta)\bigr) \RR{a}\bigl(\Li{a}(\beta/\alpha^n)\bigr),
\end{equation}
where:
\begin{itemize}
  \item $\mathcal{E} = Z(P)\cap \UC^{2}$, if this set is finite, or $\emptyset$ if it is infinite,
  \item $s(x,y)$ is a function from $\mathcal{E}\to \{-1,0,1\}$, independent of $r$,
  \item $\RR{a}(z)=\Re(z)$ when $a$ is even and $\Im(z)$ when $a$ is odd,
  \item $\Omega_{P,r,a}(x,y)$ is a rational function, obtained by taking an integer polynomial $\Psi_{r,a}$ (independent of $P$) in indeterminates $w_{i,j}$, and making the substitutions
	$w_{i,j} = x^{i} y^{j-1} \bigl(\frac{\partial^{i+j}}{\partial x^{i} \partial y^{j}} P\bigr)/\bigl(\frac{\partial}{\partial y}P\bigr)$.
\end{itemize}
In particular, if $P$ has integer coefficients, then the coefficients $c_{r}(n)$ are $\overline{\Q}$-linear combination of polylogarithms evaluated at algebraic arguments.
\end{thm}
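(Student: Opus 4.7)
The plan is to reduce $\Delta_n(P)$ to a sum of local oscillatory integrals near each boundary point in $\mathcal{E}$, then generate the asymptotic series by repeated integration by parts, with the polylogarithms arising from Mercator-expanding $\log|1-z|$ and summing the boundary contributions geometrically over the series index $k$. First, using the multiplicativity $\Delta_n(PQ)=\Delta_n(P)+\Delta_n(Q)$, I would reduce to the absolutely irreducible case: the hypothesis on $(P,\Pyinline)$ passes to each irreducible factor, since common zeros of two distinct factors would also be common zeros of $P$ and $\Pyinline$. Let $a_d(x)$ be the $y$-leading coefficient of $P$ and $\beta_1(x),\ldots,\beta_d(x)$ the roots of $P(x,\cdot)$; by the transversality hypothesis each $\beta_j$ is locally analytic on $\UC$ with no branch points there. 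Jensen's formula in $y$ cancels the $\log|a_d|$ contributions and gives
\[
    \Delta_n(P)=\sum_{j=1}^d\frac{1}{2\pi}\int_0^{2\pi}\Bigl(\log\bigl|e^{in\theta}-\beta_j(e^{i\theta})\bigr|-\logp\bigl|\beta_j(e^{i\theta})\bigr|\Bigr)\,d\theta.
\]
In the generic case $|\mathcal{E}|<\infty$, each $\theta$-interval splits into finitely many open sub-arcs on which either $|\beta_j|<1$ (integrand $\log|1-\beta_j e^{-in\theta}|$) or $|\beta_j|>1$ (integrand $\log|1-e^{in\theta}/\beta_j|$), whose endpoints are exactly the $\theta_0$ with $(e^{i\theta_0},\beta_j(e^{i\theta_0}))\in\mathcal{E}$.

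On each sub-arc, expand $\log|1-z_j|=-\Re\sum_{k\ge 1}z_j^k/k$ and integrate termwise. The key step is iterated integration by parts on each $\int z_j(\theta)^k\,d\theta$, integrating the oscillatory factor $e^{\pm ikn\theta}$ and differentiating the slowly-varying amplitude $\beta_j^{\pm k}$; each IBP contributes a factor $(\pm ikn)^{-1}$ together with a boundary evaluation. The boundary evaluations at $\theta_0\leftrightarrow(\alpha,\beta)$, summed on $k$, yield polylogarithms of various orders $a\ge 2$ via
\[
    \sum_{k\ge 1}\frac{(\beta/\alpha^n)^k}{k^a}=\Li{a}(\beta/\alpha^n),
\]
the different values of $a$ arising from the interplay of the Mercator factor $1/k$, the $(ikn)^{-1}$ from IBP, and the positive powers of $k$ produced by differentiating $\beta_j^k$. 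To extract the full coefficient of $n^{-r}$, Taylor-expand the amplitude $\beta_j^{\pm k}$ about $\theta_0$ to the appropriate order via the chain rule and substitute the implicit-function identities for the derivatives $\beta_j^{(m)}(\alpha)$; these express every derivative as a polynomial in the normalized quantities $w_{i,j}=x^iy^{j-1}(\partial^{i+j}P/\partial x^i\partial y^j)/\Pyinline$ evaluated at $(\alpha,\beta)$, yielding the universal polynomial $\Psi_{r,a}$ and the rational function $\Omega_{P,r,a}$. The operators $\RR{a}$ and $\RR{a+1}$ arise from tracking the factors of $i$ produced by the IBPs alongside the outer $\Re$, while the sign $s(\alpha,\beta)\in\{-1,+1\}$ records whether $\theta_0$ is a left or right endpoint of its sub-arc. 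The residual integrals after sufficiently many IBPs are oscillatory with smooth amplitude and are $O_{P,r}(n^{-r})$ uniformly in $k$ after the $k$-summation.

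The main obstacle will be the combinatorial bookkeeping: (i) verifying that the interchange of $\sum_k$ with iterated IBP and Taylor expansion is uniformly convergent, so that each $c_r(n)$ emerges as a bona fide quasiperiodic function of $n$ with no hidden secular growth; (ii) isolating a single universal polynomial $\Psi_{r,a}$ in the $w_{i,j}$, independent of $P$, from the many contributions across roots $\beta_j$ and sub-arcs; and (iii) matching the parity-dependent $\RR{a}/\RR{a+1}$ pattern to the precise configuration of $i$ factors produced by the IBPs. The degenerate case $\mathcal{E}=\emptyset$ (when $Z(P)\cap\UC^2$ is infinite) needs a short separate treatment but is combinatorially trivial: an absolutely irreducible $P$ with infinite torus intersection has very restricted form, and $\Delta_n(P)$ vanishes for all sufficiently large $n$. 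Uniqueness of the resulting expansion is \propref{prop:uniqueness}. When $P\in\Z[x,y]$, $\mathcal{E}\subset\overline{\Q}^2$ and each $\Omega_{P,r,a}(\alpha,\beta)$ is algebraic, so $c_r(n)$ is a $\overline{\Q}$-linear combination of polylogarithms at algebraic arguments, as stated.
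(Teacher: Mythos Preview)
Your overall strategy is essentially the paper's: Jensen in $y$, partition of $\UC$ into sub-arcs by the sign of $|\beta_j|-1$, Mercator expansion, iterated integration by parts against $e^{\pm ikn\theta}$, summation in $k$ to polylogarithms, and implicit differentiation to produce $\Psi_{r,a}$. The obstacle you flag as (i) is handled in the paper by showing that the $k$-fold IBP remainder integral is $\Osub{k}(m^{k-\delta})$ for some fixed $\delta>0$ coming from the order of vanishing of $|\beta_j(e^{i\theta})|-1$ at the endpoints (\lemref{lem:integralbounds}); this $m^{-\delta}$ is exactly what makes the sum of remainders over the Mercator index converge and yields the uniform $\Ok(n^{-k})$ you need.

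The genuine gap is your treatment of the degenerate case. The assertion that an absolutely irreducible $P$ with infinite $Z(P)\cap\UC^2$ ``has very restricted form, and $\Delta_n(P)$ vanishes for all sufficiently large $n$'' is unjustified and in general false. By \lemref{lem:essentialset}, such a $P$ is almost self-reciprocal and admits \emph{toric} root functions---branches $\rho$ with $|\rho(x)|\equiv 1$ on all of $\UC$. For these, neither of your sub-arc expansions is available (the Mercator series has $|z|\equiv 1$ and diverges), so your IBP scheme simply does not apply to that branch. Its contribution to $\Delta_n(P)$ is
\[
\int_0^{2\pi L}\log\bigl|1-e^{i(f(t)-nt)}\bigr|\,dt,\qquad e^{if(t)}=\rho(e^{it}),
\]
which is generally nonzero. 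The paper shows it is $\Ok(n^{-k})$ for every $k$ via a separate repeated-integration-by-parts argument (\lemref{lem:rootfnontorus}), using $\tfrac{d}{dt}\Li{k+1}(e^{it})=i\,\Li{k}(e^{it})$ together with the quasi-periodicity $f(t+2\pi L)=f(t)+2\pi M$ to kill all boundary terms. Without this lemma the theorem is unproved whenever $P$ is almost self-reciprocal with a toric branch. (A minor related point: $s(\alpha,\beta)$ may be $0$, not only $\pm 1$, when $|\beta_j|-1$ touches zero at $\alpha$ without changing sign; the two one-sided boundary contributions then cancel.)
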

(The definition of $s(x,y)$ is given in section~\ref{sec:proofofmain}, and $\Psi_{r,a}$ is defined in \eqref{eq:Psidef}.)

The only appearance of $n$ in the formula for $c_{r}(n)$ is in $\Li{a}(\beta/\alpha^n)$.  Hence, if the $x$-coordinates of the points in $\mathcal{E}$ are all $m$-th roots of unity, then for each $r$, $c_{r}(n)$ depends only on $n \jmod{m}$.

Because the a.p.p.s.\ above only sums over $r\ge 2$, $\Delta_{n}(P) = \Osub{P}(n^{-2})$ for all $P$ as in the theorem.  This fact was proved by Boyd in \cite{boyd2}.  Indeed, he was able to go further, showing that for every nonzero $P\in\C[x,y]$, $\Delta_{n}(P) = \Osub{P}(n^{-(1+\delta)})$ for some $\delta>0$.

Of course, if $\mathcal{E}$ is empty, the coefficients $c_{r}(n)$ in the theorem are all trivially equal to $0$.  It turns out that there is a simple sufficient criterion for when this occurs.  Let $P^{\star}(x,y)$ be the reciprocal polynomial of $P$ (defined in section~\ref{sec:notation}).
\begin{cor}\label{cor:zeroseq}
Let $P(x,y)$ be as in \thmref{thm:main}.  If $P^{\star}=cP$ for some constant $c\in \C$, then $\mathcal{E} = \emptyset$, hence $\Delta_{n}(P) \quasi 0$.
\end{cor}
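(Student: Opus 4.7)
The plan is to show that the hypothesis $P^{\star}=cP$ forces every point of $Z(P)\cap\UC^{2}$ to lie on a positive-dimensional subset of $\UC^{2}\cap Z(P)$, so that this intersection is either empty or infinite.  In either case $\mathcal{E}=\emptyset$ by the definition given in \thmref{thm:main}, and the formula \eqref{eq:maincoeffformula} then yields $c_r(n)\equiv 0$ for every $r\ge 2$, giving $\Delta_n(P)\quasi 0$.

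I would first unpack $P^{\star}=cP$ as the polynomial identity
\[
	cP(x,y) \;=\; x^{d_x} y^{d_y}\,\overline{P}(1/x,1/y),
\]
where $\overline{P}$ denotes the polynomial obtained from $P$ by conjugating each coefficient and $d_x,d_y$ are the partial degrees.  This yields, on $(\C^{\ast})^{2}$, the biconditional $P(x,y)=0 \iff \overline{P}(1/x,1/y)=0$.

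Now fix $(\alpha,\beta)\in Z(P)\cap\UC^{2}$.  The nonsingularity hypothesis of \thmref{thm:main} gives $\Pyinline(\alpha,\beta)\neq 0$, so the implicit function theorem produces a unique holomorphic local section $y=f(x)$ of $Z(P)$ with $f(\alpha)=\beta$.  Conjugating, $\partial\overline{P}/\partial y(\bar\alpha,\bar\beta)=\overline{\Pyinline(\alpha,\beta)}\neq 0$, so $Z(\overline{P})$ also admits a unique local section at $(\bar\alpha,\bar\beta)$.  I would then produce two a priori different candidates for that section and equate them.  First, applying the biconditional at $(x,y)=(1/u,f(1/u))$ shows that $u\mapsto 1/f(1/u)$ satisfies $\overline{P}(u,\,1/f(1/u))=0$.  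Second, directly conjugating the identity $P(\bar u,f(\bar u))=0$ shows that $u\mapsto\overline{f(\bar u)}$ satisfies $\overline{P}(u,\,\overline{f(\bar u)})=0$.  Both candidates take the value $\bar\beta$ at $u=\bar\alpha$, so uniqueness forces $\overline{f(\bar u)}=1/f(1/u)$ throughout a complex neighborhood of $\bar\alpha$.

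To finish, I would restrict $u$ to $\UC$ near $\bar\alpha$, where $1/u=\bar u$, and substitute $v=\bar u$: the identity collapses to $\overline{f(v)}=1/f(v)$, so $|f(v)|^{2}=1$, meaning the arc $\{(v,f(v)):v\in\UC\text{ near }\alpha\}$ lies inside $Z(P)\cap\UC^{2}$, forcing that intersection to be infinite.  The substantive step is the comparison in the previous paragraph: naively one might try to compare two sections of $Z(P)$ at $(\alpha,\beta)$ and $(\bar\alpha,\bar\beta)$, but this never produces the modulus identity one needs.  The key insight is to route the reciprocity through $Z(\overline{P})$, where both the reciprocity-derived inversion and the tautological conjugation deliver local sections, and only then invoke the implicit function theorem's uniqueness.
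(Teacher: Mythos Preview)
Your argument is correct and is essentially the paper's own: the paper derives \cororef{cor:zeroseq} from part~2 of \lemref{lem:essentialset}, whose proof is exactly the reflection-plus-uniqueness step you carry out. The only cosmetic difference is that the paper compares $\rho(x)$ with $\rho^{\star}(x)=1/\overline{\rho}(1/x)$ as two root functions of $P$ itself (using $P^{\star}=cP$ to transport $\rho^{\star}$ from $Z(P^{\star})$ back to $Z(P)$), rather than routing through $Z(\overline{P})$ as you do; after the change of variable $u=\bar{x}$ the two identities coincide.
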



Boyd's \propref{prop:boydasymp} corresponds to $P(x,y)=1+x+y$.  We can be more explicit in that case:
\begin{cor}\label{cor:main}
$\Delta_{n}(1+x+y) \quasi \sum_{r=2}^{\infty} c_r(n)/n^r$, for
\begin{equation}\label{eq:maincoroformula}
c_r(n) =  \frac{2}{\pi} \!\!
	 \sum_{\substack{a,b \\ a+b=r+1}} 
	 	\!\! (-1)^{b} \RR{a}\bigl(\Li{a}(\xi^{n+1})\bigr)
		\sum_{j=b}^{r-1} \stirone{j}{b}\stirtwo{r-1}{j} \RR{a+1}(\xi^{j}),
\end{equation}
where $\stironesm{a}{b}$ and $\stirtwosm{a}{b}$ represent the (unsigned) Stirling numbers of the first and second kind, respectively, and $\xi=\exp(2\pi i/3)$.
In particular, the coefficients $c_{r}(n)$ are linear combinations of polylogarithms evaluated at third roots of unity, with coefficients in $\frac{1}{2\pi} \Z[\sqrt{3}]$.
\end{cor}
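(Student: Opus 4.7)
The plan is to apply \thmref{thm:main} directly to $P(x,y) = 1+x+y$ and carry out each of its ingredients explicitly. The hypotheses are trivially satisfied because $P_y \equiv 1$ is nowhere zero, and $P$ is linear hence absolutely irreducible. A point $(\alpha,\beta)\in\UC^2$ with $P(\alpha,\beta)=0$ must satisfy $\beta = -1-\alpha$ and $|\beta|=1$, which forces $\cos(\arg\alpha) = -1/2$; thus $\mathcal{E} = \{(\xi,\xi^{-1}),(\xi^{-1},\xi)\}$, a pair of complex conjugate points. Using $\xi^3=1$, one checks that $\beta/\alpha^n$ equals $\xi^{-(n+1)}$ at the first point and $\xi^{n+1}$ at the second.

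The crucial simplification is that $P_y\equiv 1$ and every partial derivative of $P$ of total order at least $2$ vanishes identically. So the substitution $w_{i,j} = x^{i}y^{j-1}\,\partial^{i+j}P/(\partial x^i\partial y^j\cdot P_y)$ collapses to $w_{1,0}=x/y$, $w_{0,1}=1$, and $w_{i,j}=0$ for all $(i,j)$ with $i+j\ge 2$ (and $w_{0,0}$, should it appear, reduces to $(1+x+y)/y$, which vanishes on $\mathcal{E}$). Consequently $\Omega_{P,r,a}$ restricted to $\mathcal{E}$ is a polynomial in the single quantity $x/y$, which takes conjugate values $\xi^{-1}$ and $\xi$ at the two points of $\mathcal{E}$. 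Since $\Psi_{r,a}$ has integer coefficients and $\overline{\Li{a}(z)}=\Li{a}(\bar z)$ on $\UC$, the two terms in the sum over $\mathcal{E}$ are complex conjugates of each other. Assuming the definition of $s$ in Section~\ref{sec:proofofmain} yields $s(\xi,\xi^{-1}) = -s(\xi^{-1},\xi) = \pm 1$ at the two conjugate points (as forced by reality and nontriviality of $c_r(n)$), the two contributions combine into $2\,\RR{a+1}(\Omega_{P,r,a}(\xi^{-1},\xi))\,\RR{a}(\Li{a}(\xi^{n+1}))$, accounting for the factor $2/\pi$ and the single polylogarithm argument $\xi^{n+1}$ in \eqref{eq:maincoroformula}.

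The hard part is the final step: reading off from \eqref{eq:Psidef} the closed-form value of $\Psi_{r,a}$ at $w_{1,0}=\xi$, $w_{0,1}=1$, all other $w_{i,j}=0$, and recognizing it as $(-1)^{r+1-a}\sum_{j=r+1-a}^{r-1}\stirone{j}{r+1-a}\stirtwo{r-1}{j}\xi^{j}$. The expected origin of the two Stirling-number factors is standard: the $\stirtwo{\cdot}{\cdot}$ should arise from expanding a power of a logarithm in terms of monomials (as with $\log(1+u)^{r-1}$), while the $\stirone{\cdot}{\cdot}$ should arise from extracting coefficients in a corresponding exponential/polynomial expansion. Pinning down this identity exactly will require chasing through the recursive definition of $\Psi_{r,a}$ at these particular vanishing inputs, effectively a Fa\`a di Bruno-style computation at a very simple specialization. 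Once that combinatorial identity is in hand, assembling the pieces yields \eqref{eq:maincoroformula}, and the ring statement $\frac{1}{2\pi}\Z[\sqrt{3}]$ follows at once from $\Re(\xi^j),\Im(\xi^j)\in \tfrac{1}{2}\Z[\sqrt{3}]$ together with the integrality of the Stirling numbers.
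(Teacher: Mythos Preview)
Your overall strategy matches the paper's: apply \thmref{thm:main}, compute $\mathcal{E}$, exploit the conjugate symmetry to collapse the sum to a single point, and evaluate $\Omega_{P,r,a}$ explicitly. Two pieces, however, are left genuinely incomplete.

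First, the sign determination. Arguing that $s(\xi,\xi^{-1}) = -s(\xi^{-1},\xi) = \pm 1$ ``as forced by reality and nontriviality of $c_r(n)$'' is not a proof: reality only forces the two signs to be opposite, and you have no a priori knowledge that $c_r(n)\not\equiv 0$. The paper simply uses the explicit root function $\rho(x)=-x-1$: since $\abs{\rho(e^{i\theta})}^2 = 2+2\cos\theta$, one sees immediately that $\abs{\rho}-1$ changes from negative to positive as $x$ passes $\xi^{-1}$ counterclockwise, giving $s(\xi^{-1},\xi)=+1$ and $s(\xi,\xi^{-1})=-1$. The overall sign matters for the final formula, so this must be pinned down.

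Second, the ``hard part'' you flag is not hard; you are missing one observation. In your route through \eqref{eq:Psidef}, at the specialization $w_{1,0}=x/y$, $w_{0,1}=1$, $w_{i,j}=0$ for $i+j\ge 2$, one has $Q_1=-x/y$ and $Q_n=0$ for every $n\ge 2$ (check the degree constraints in the definition of $E_n$: the only surviving monomial would need $e_{1,0}=n$, $e_{0,1}=2n-2$, but then $\sum e_{i,j}=3n-2\ne 2n-1$ unless $n=1$). Hence $\Psi_{r,a}=\Phi_{r-1,\,b}(1,-x/y,0,\dotsc)$ with $b=r+1-a$, and since $B_{j,i}(y_1,0,\dotsc,0)=0$ unless $i=j$ while $B_{j,j}(y_1)=y_1^{\,j}$, the definition of $\Phi$ collapses immediately to
\[
\Omega_{P,r,a}(\alpha,\beta)=(-1)^{b}\sum_{j}\stirone{j}{b}\stirtwo{r-1}{j}(\alpha/\beta)^{j}
=(-1)^{b}\sum_{j}\stirone{j}{b}\stirtwo{r-1}{j}\,\xi^{j}
\]
at $(\alpha,\beta)=(\xi^{-1},\xi)$. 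The paper reaches exactly the same expression but via \lemref{lem:Psiproperties}, using $\rho(x)=-x-1$ so that $\rho'(x)=-1$ and all higher derivatives vanish; this bypasses the $Q_n$ altogether and is slightly cleaner. With this and the correct signs in hand, your conjugate-pair argument and the ring statement go through exactly as you described.
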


After some preliminary lemmas, we prove the above results in sections \ref{sec:proofofuniqueness} through \ref{sec:proofofcor}.  In section~\ref{sec:earlycoeffs}, we give expressions for the first two nonzero coefficients, $c_{2}(n)$ and $c_{3}(n)$, and verify that the value for $c_2(n)$ in \cororef{cor:main} agrees with Boyd's $c(n)$ from \propref{prop:boydasymp}.
In section~\ref{sec:numerics}, we presents the results of numerical calculations, giving support for our formulas and analyzing a degenerate case.  Section~\ref{sec:signdet} describes an algebraic procedure for determining the values of the function $s(x,y)$ mentioned in \thmref{thm:main}.


\section{Additional Notation and Definitions}\label{sec:notation}

For a function $f(z_1,\ldots,z_n)$,	let $\overline{f}(z_1,\ldots,z_n)\deq \overline{f(\overline{z_1},\ldots,
	\overline{z_n})}$.

For a nonzero polynomial $P(z_1,\ldots,z_n)$, its \emph{reciprocal polynomial} is defined as 
    $P^\star(z_1,\ldots,z_n) \deq z_1^{d_1} \cdots z_n^{d_n} \, \overline{P}(z_1^{-1},\ldots, z_n^{-1})$, where $d_i=\deg_{z_i}(P)$.  We will say that $P$ is \emph{almost self-reciprocal} (or \emph{a.s.r.}) if $P^{\star}=cP$ for some constant $c$.

For $Q_1, Q_2\in\C[x,y]$, thinking of them as as polynomials in $y$ with coefficients in $\C[x]$, we let $\Res_y(Q_1,Q_2)$ and $\delta_y(Q_1)$ denote, respectively, the resultant and discriminant with respect to $y$.  These are both polynomials in $x$ alone.

An \emph{arc} is, for us, a connected subset of $\UC$, including possibly $\UC$ itself.  We will say that $\UC$ is an improper arc, and all others are proper.  An \emph{open} (respectively, \emph{closed}) arc is an arc that is open (closed) relative to the usual topology on $\UC$.  Arcs will be oriented counter-clockwise.

For an open, proper arc $\gamma$, equal to $\bigl\{e^{it} | \alpha<t<\beta\bigr\}$ with $0<\beta-\alpha\le 2\pi$, let $\eval{f(x)}{\gamma} \deq \lim_{t\to 0^{+}} \bigl(f(e^{i(\beta-t)})-f(e^{i(\alpha+t)})\bigr)$.  (Usually, this is just the difference of the values of $f$ at the endpoints of $\gamma$.  The limit is most useful when $\beta = \alpha+2\pi$ and $f$ is discontinuous at $e^{i\alpha}=e^{i\beta}$.)

For a set $S\subseteq\C$, let  $S^{-1}\deq\{z^{-1}|z\in S\}$ and $\overline{S}\deq\{\overline{z}|z\in S\}$.

We will follow the convention of letting the  Pochhammer symbol $(x)_k$ denote the ``falling factorial'' $x(x-1)(x-2)\dotsm (x-k+1)= k! \binom{x}{k}$.

As mentioned earlier, $\stironesm{n}{k}$ and $\stirtwosm{n}{k}$ will represent the (unsigned) Stirling numbers of the first and second kind, respectively \cite{wilf}.  These are nonnegative integers, and are nonzero if and only if $1\le k\le n$, with the exception that $\stironesm{0}{0}=\stirtwosm{0}{0}=1.$ 

For integers $m$ and $k$ with $k\ge 0$, define an operator $H_{k}^{m}$ on $C^{k}$ functions $f(z)$ as follows:
\[
\h{m}{k}f \deq \Bigl(z \frac{d}{dz}\Bigr)^k \bigl(f(z)^m\bigr).
\]

Let $B_{n,k}(z_1,z_2,\ldots,z_{n-k+1})$ be the exponential, partial Bell polynomial
\[
\sum \frac{n!}{j_1 ! j_2! \cdots j_{n-k+1}!}
	\prod_{i=1}^{n-k+1} \Bigl(\frac{z_i}{i!}\Bigr)^{j_i},
\]
where the sum is taken over all tuples $(j_1,\dotsc, j_{n-k+1})$ of nonnegative integers such that $\sum_i j_i=k$ and $\sum_i i j_i=n$.  These polynomials have integer coefficients and are homogeneous of degree $k$
\cite{comtet1}.

For integers $n,k$, we define
\[
\Phi_{n,k}(y_0,\ldots,y_{n-k+1}) \deq
	\sum_{i, j} (-1)^{i-k} \stirone{i}{k} \stirtwo{n}{j} {y_0}^{n-i} B_{j,i}(y_1,\ldots,y_{j-i+1}).
\]
(The summands vanish except when $0\le k\le i\le j \le n$.)
This is clearly also an integer polynomial.  Like $B_{n,k}$, $\Phi_{n,k}$ is identically zero if $n<k$, $k<0$, or if $k=0$ and $n>0$; otherwise, $\Phi_{n,k}$ is homogeneous of degree $n$.  (See Table~\ref{PhiTable}.)  These polynomials become unwieldy for large $n$---for instance, $\Phi_{10,1}$ has 138 terms.

\begin{table}[bt]
  \centering
  \begin{tabular}{|@{\,}c@{\,}|@{\;}c@{\;}|p{1.55in} |p{1.1in}|p{.5in}|@{\;\,}c@{\;\,}|}
    \hline
      $n \,\backslash\, k$ & \strutb{2.2mm}0\struta{4.5mm} & 1 & 2 & 3 & 4 \\ \hline
      0 & \strutb{2.2mm} 1 \struta{4.5mm} & 0 & 0 & 0 & 0 \\ \hline
      1 & \strutb{2.2mm} 0 \struta{4.5mm} & $y_1$ & 0 & 0 & 0 \\ \hline
      2 & \strutb{2.2mm} 0 \struta{4.5mm} & $y_0 y_1 + y_0 y_2 - y_1^2$ & $y_1^2$ & 0 & 0\\ \hline
      3 & 0\struta{4.5mm} & $y_0^2 y_1 + 3 y_0^2 y_2 + y_0^2 y_3$
                    & $3 y_0 y_1^2 + 3 y_0 y_1 y_2$ & $y_1^3$ & 0 \\[1mm]
        & & $\,-\, 3 y_0 y_1^2 - \strutb{2.2mm} 3 y_0 y_1 y_2 \,+\, 2 y_1^3$
                    & $\,-\, 3 y_1^3$ &  &\\ \hline
      4 & 0\struta{4.5mm} & $y_0^3 y_1 + 7 y_0^3 y_2 + 6 y_0^3 y_3$
      	& $7 y_0^2 y_1^2 + 18 y_0^2 y_1 y_2$ & $6 y_0 y_1^2 y_2$ & $y_1^4$ \\[1mm]
	& & $\,+\, y_0^3 y_4 - 7 y_0^2 y_1^2 - 18 y_0^2 y_1 y_2$
		& $\,+\, 4 y_0^2 y_1 y_3 + 3 y_0^2 y_2^2$ & $\,+\, 6 y_0 y_1^3$ & \\[1mm]
	& & $\,-\, 4 y_0^2 y_1 y_3 - 3 y_0^2 y_2^2$
		& $\,-\, 18 y_0 y_1^2 y_2$ & $\,-\, 6 y_1^4$ & \\[1mm]
	& & $\,+\, 12 y_0 y_1^3 + 12 y_0 y_1^2 y_2 - 6 y_1^4$
		& $\,-\, 18 y_0 y_1^3 + \strutb{2.2mm} 11 y_1^4$ & & \\ \hline
  \end{tabular}
  \caption{The polynomials $\Phi_{n,k}$, for $0\le k, n\le 4$.}
  \label{PhiTable}
\end{table}

For each integer $n\ge 1$, we define an integer polynomial $Q_{n}$ in indeterminates $w_{i,j}$ (with $i,j\ge 0$ and $i+j\le n$) as follows.  Let $E_{n}$ be the set of all doubly-indexed sequences $\mathbf{e} = \{e_{i,j}\}_{i,j\ge 0}$ of nonnegative integers satisfying $e_{0,0}=0$, $\sum_{i,j} i e_{i,j} = n$, $\sum_{i,j} j e_{i,j} = 2n-2$, and $\sum_{i,j} e_{i,j} = 2n-1$.  (Each such sequence will clearly have only finitely many nonzero terms.)  For each $\mathbf{e}\in E_{n}$, let $w_{\mathbf{e}} \deq \prod_{i,j\ge 0} w_{i,j}^{e_{i,j}}$ and
\[
b_{n,\mathbf{e}} \deq (-1)^{2n-1-e_{0,1}}
	\frac{n! (2n-2-e_{0,1})! e_{0,1}!}{\prod_{i,j\ge 0} e_{i,j}! (i!j!)^{e_{i,j}}}.
\]
We then define $Q_{n} \deq \sum_{\mathbf{e}\in E_{n}} b_{n,\mathbf{e}} \,w_{\mathbf{e}}$.  (See Wilde \cite{wilde} for a proof that the coefficients $b_{n,\mathbf{e}}$ are integers.)
For example, $Q_{1} = -w_{1,0}$ and $Q_{2} = -w_{0,1}^{2} w_{2,0} + 2 w_{0,1} w_{1,0} w_{1,1} - w_{0,2} w_{1,0}^{2}$.

For integers $2\le a \le r$, let
\begin{equation}\label{eq:Psidef}
\Psi_{r,a} = \Phi_{r-1,r-a+1}\bigl(1, Q_{1}, Q_{2}, \ldots\bigr).
\end{equation}
This is an integer polynomial in $w_{i,j}$, for $i,j\ge 0$ and $i+j\le a-1$.  Some examples of $\Psi_{r,a}$ are shown in Table~\ref{PsiTable}.
 
\begin{table}[bt]
  \centering
  \begin{tabular}{|c|c|p{3.75in}|}
  \hline
  $r$	& $a$ & $\Psi_{r,a}$ \strutb{2.2mm}\struta{4.5mm} \\ \hline
  2	& 2	& $-w_{1,0}$ \strutb{2.2mm}\struta{4.5mm} \\ \hline
  3	& 2	& $w_{1,0}^{2}$ \strutb{2.2mm}\struta{4.5mm} \\  \cline{2-3}
  	& 3	& $-w_{0,1}^{2} w_{2,0} + 2 \, w_{0,1} w_{1,0} w_{1,1} 
			- w_{0,2} w_{1,0}^{2} - w_{1,0}^{2} - w_{1,0}$  
				\struta{4.5mm} \strutb{2.2mm} \\ \hline
  4	& 2	& $-w_{1,0}^{3}$ \strutb{2.2mm}\struta{4.5mm} \\  \cline{2-3}
  	& 3	& $3 \, w_{0,1}^{2} w_{1,0} w_{2,0} - 6 \, w_{0,1} w_{1,0}^{2} w_{1,1} 
  			+ 3 \, w_{0,2} w_{1,0}^{3} + 3 \, w_{1,0}^{3} + 3 \, w_{1,0}^{2}$
			\struta{4.5mm} \strutb{2.2mm} \\  \cline{2-3}
	& 4	& $-w_{0,1}^{4} w_{3,0} + 3 \, w_{0,1}^{3} w_{1,0} w_{2,1} 
			+ 3 \, w_{0,1}^{3} w_{1,1} w_{2,0} - 3 \, w_{0,1}^{2} w_{1,0}^{2} w_{1,2}$
			 \struta{4.5mm} \\
	&	& \hspace{0mm}  $ -\, 3 \, w_{0,1}^{2} w_{0,2} w_{1,0} w_{2,0} 
			- 6 \, w_{0,1}^{2} w_{1,0} w_{1,1}^{2}
			+ 9 \, w_{0,1} w_{0,2} w_{1,0}^{2} w_{1,1}$ \\
	&	& \hspace{0mm}  $+\, w_{0,1} w_{0,3} w_{1,0}^{3} - 3 \, w_{0,2}^{2} w_{1,0}^{3}
			- 3 \, w_{0,1}^{2} w_{1,0} w_{2,0} + 6 \, w_{0,1} w_{1,0}^{2} w_{1,1}$ \\
	&	& \hspace{0mm}  $-\, 3 \, w_{0,2} w_{1,0}^{3} - 3 \, w_{0,1}^{2} w_{2,0} 
			+ 6 \, w_{0,1} w_{1,0} w_{1,1} - 3 \, w_{0,2} w_{1,0}^{2}$ \\
	&	& \hspace{0mm}  $ -\, 2 \, w_{1,0}^{3} - 3 \, w_{1,0}^{2} - w_{1,0}$
			  \strutb{2.2mm} \\ \hline
  \end{tabular}
  \caption{The polynomials $\Psi_{r,a}$, for $2\le a, r\le 4$.}
  \label{PsiTable}
\end{table}

Finally, given a polynomial $P\in\C[x,y]$, an open set $U\subseteq \C$, and a continuous function $\rho:U\to\C$,  we will say that $\rho(x)$ is a \emph{root function} for $P(x,y)$ on $U$ if $P\bigl(x,\rho(x)\bigr)= 0$ for all $x\in U$.


\section{Lemmas}\label{sec:lemmas}

Let $P(x,y)\in\C[x,y]$.  Write
\[
P(x,y)=\sum_{j=0}^d a_j(x)y^j,
\]
where $d\deq \deg_y(P)$ and $a_j(x)\in\C[x]$ for all $j$.  Let $\mathcal{R}(x)\deq\Res_y\bigl(P,\Pyinline \bigr)$, which is also equal to $\pm a_d(x)\delta_y(P)$.  For the remainder of this article, we will assume (as in \thmref{thm:main}) that $P$ and $\Pyinline$ have no common root on $\UC\times\C$.  This is equivalent to $\mathcal{R}(x)$ not having any roots on $\UC$.  (The roots of $\mathcal{R}(x)$ are sometimes referred to as the \emph{critical points} of $P$.)

Also, for this section, we will assume that $P$ is absolutely irreducible, with $\deg_{x}(P)$ and $\deg_{y}(P) >0$.  It follows easily that $P^\star$ is also irreducible, with $\deg_x(P^\star)=\deg_x(P)$, $\deg_y(P^\star)=\deg_y(P)$, and $P^{\star \star}=P$.

The Implicit Function Theorem guarantees the existence of root functions for $P$, away from the roots of $\mathcal{R}(x)$.  More concretely, we have the following \cite[Section 45]{mark3}:
\begin{lem}\label{lem:algebraic}
For any open, simply connected set $U\subset \C\setminus Z(\mathcal{R})$, there exist exactly $d$ root functions $\rho_1(x),\ldots,\rho_d(x)$ of $P(x,y)$ on $U$.  These are all holomorphic, single-valued functions, and for distinct $i, j$, there is no $\alpha\in U$ for which $\rho_{i}(\alpha)=\rho_{j}(\alpha)$.
Furthermore, for all $x\in U$, we have the factorization
\[
P(x,y)=a_d(x)\prod_{j=1}^d \bigl(y-\rho_j(x)\bigr).
\]
\end{lem}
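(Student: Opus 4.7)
The plan is to combine the holomorphic Implicit Function Theorem (for local existence of a root function) with analytic continuation and the monodromy theorem (for globalization over the simply connected set $U$).

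First, fix any basepoint $\alpha_0\in U$. Since $\mathcal{R}(\alpha_0) = \pm a_d(\alpha_0)\delta_y(P)(\alpha_0) \neq 0$, the leading coefficient $a_d(\alpha_0)$ is nonzero and the discriminant $\delta_y(P)(\alpha_0)$ is nonzero, so $P(\alpha_0, y)$ has degree exactly $d$ with $d$ pairwise distinct roots $\beta_1,\ldots,\beta_d\in\C$. At each point $(\alpha_0,\beta_j)$, the partial $\Pyinline$ is nonzero (since $\beta_j$ is a simple root of $P(\alpha_0,y)$), and the holomorphic Implicit Function Theorem supplies a holomorphic function $\rho_j$ on a disk about $\alpha_0$ with $\rho_j(\alpha_0)=\beta_j$ and $P\bigl(x,\rho_j(x)\bigr)\equiv 0$. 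Shrinking the disk if necessary, the images of the $\rho_j$'s are disjoint by continuity.

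Next, extend globally. Along any path $\gamma\subset U$ starting at $\alpha_0$, each germ $\rho_j$ can be analytically continued: at every $x\in\gamma$ we still have $\mathcal{R}(x)\neq 0$, so $\Pyinline\bigl(x,\rho_j(x)\bigr)\neq 0$ for the current branch, and the Implicit Function Theorem re-applies. Two distinct branches never collide during this process, because at every $x\in U$ the polynomial $P(x,y)$ has $d$ distinct roots in $y$; thus the $d$ continuations stay separated and no branching occurs. Simple-connectedness of $U$ then lets us invoke the monodromy theorem to conclude that the continuation of each $\rho_j$ to any $x\in U$ is independent of the chosen path, yielding well-defined single-valued holomorphic functions $\rho_j\colon U\to\C$.

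Finally, I would read off the factorization: on $U$, $a_d(x)\neq 0$, so $P(x,y)/a_d(x)$ is a monic polynomial in $y$ of degree $d$ whose $d$ distinct roots (at each $x$) are precisely $\rho_1(x),\ldots,\rho_d(x)$, giving $P(x,y)=a_d(x)\prod_{j=1}^d\bigl(y-\rho_j(x)\bigr)$. Exhaustiveness of the list of root functions is automatic: any continuous root function on $U$ must, at each $x$, equal one of the $\rho_j(x)$, and by the disjointness of branches it must coincide with a single $\rho_j$ throughout. The only real subtlety is the non-collision of branches during continuation, but this is immediate from $\delta_y(P)(x)\neq 0$ on $U$; absolute irreducibility of $P$ and the hypothesis that $P,\Pyinline$ share no zero on $\UC\times\C$ are not actually needed for this lemma, so the argument is the standard Riemann-surface fact for algebraic functions specialized to a simply connected domain.
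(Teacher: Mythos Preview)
Your argument is correct and is the standard one: local existence via the holomorphic Implicit Function Theorem, analytic continuation along paths in $U$ (using $\delta_y(P)(x)\neq 0$ to ensure branches stay simple and separated), and then the monodromy theorem to obtain single-valued global branches. The paper does not actually supply its own proof of this lemma; it simply cites a standard reference (Markushevich, \emph{Theory of Functions of a Complex Variable}, Section~45) immediately before the statement, so there is nothing to compare against beyond noting that your approach is precisely the classical one such a reference would contain. Your closing remark that absolute irreducibility and the $\UC\times\C$ hypothesis are irrelevant to this particular lemma is also correct.
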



We now turn to some applications of the reciprocal polynomial, $P^{\star}(x,y)$.
Its main utility for us stems from the following two observations: if $P(\alpha,\beta)=0$, then $P^{\star}(1/\overline{\alpha}, 1/\overline{\beta})=0$ as well; and if $\abs{z} = 1$, then $z= 1/\overline{z}$.  Therefore, $Z(P)\cap \UC^{2} \subseteq Z(P)\cap Z(P^{\star})$.

\begin{lem}\label{lem:betarootfunction}
Let $U\subset\C^{\times}$, and suppose that $\rho(x)$ is a root function for $P(x,y)$ on $U$.  Let $S$ be the set of all zeros of $\rho(x)$ on $U$.  Then the function $\rho^{\star}(x) \deq 1/\overline{\rho}(1/x)$ is a root function for $P^{\star}$ on $\overline{(U\setminus S)}^{\,-1}$.
\end{lem}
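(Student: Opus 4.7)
The plan is a direct computation verifying the defining identity $P^\star\bigl(x,\rho^\star(x)\bigr)=0$, with the main work being careful bookkeeping of the interaction of complex conjugation, inversion, and the definition $\overline{P}(z,w)=\overline{P(\overline{z},\overline{w})}$.

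First I would check the domain. For $x\in\overline{(U\setminus S)}^{\,-1}$ we have $1/\overline{x}\in U\setminus S$, so $\rho(1/\overline{x})$ is defined, holomorphic, and nonzero (since the zeros of $\rho$ on $U$ have been removed). Hence $\overline{\rho}(1/x)=\overline{\rho(1/\overline{x})}$ is a nonzero complex number, so $\rho^\star(x)=1/\overline{\rho}(1/x)$ is a well-defined continuous (in fact holomorphic, on the interior of the domain) function. Note also $x\neq 0$ on this domain, since the original $U\setminus S\subset\C^\times$.

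Next I would carry out the main calculation. Using the definition $P^\star(x,y)=x^{d_1}y^{d_2}\overline{P}(1/x,1/y)$, we get
\[
P^\star\bigl(x,\rho^\star(x)\bigr)=x^{d_1}\rho^\star(x)^{d_2}\,\overline{P}\!\left(\frac{1}{x},\,\overline{\rho}(1/x)\right).
\]
Unpacking the conjugations,
\[
\overline{P}\!\left(\frac{1}{x},\,\overline{\rho(1/\overline{x})}\right)
= \overline{P\!\left(\overline{1/x},\,\rho(1/\overline{x})\right)}
= \overline{P\!\left(1/\overline{x},\,\rho(1/\overline{x})\right)}.
\]
Since $1/\overline{x}\in U\setminus S$ and $\rho$ is a root function of $P$ on $U$, the inner expression $P\bigl(1/\overline{x},\rho(1/\overline{x})\bigr)$ vanishes. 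The prefactor $x^{d_1}\rho^\star(x)^{d_2}$ is finite (both factors are nonzero), so the product is $0$, as required.

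I do not expect any serious obstacle: the statement is essentially a symmetry property of $P^\star$ under the involution $(x,y)\mapsto (1/\overline{x},1/\overline{y})$, and the only care needed is to track where each function is defined (hence the removal of $S$, so that $1/\overline{\rho}$ doesn't blow up) and to correctly apply the identity $\overline{P}(z,w)=\overline{P(\overline{z},\overline{w})}$ in the right direction.
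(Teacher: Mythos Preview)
Your proof is correct and follows essentially the same route as the paper's: set $w=1/\overline{x}\in U\setminus S$, observe $\rho^\star(x)$ is well-defined since $\rho(w)\ne 0$, and deduce $P^\star\bigl(x,\rho^\star(x)\bigr)=0$ from $P\bigl(w,\rho(w)\bigr)=0$. The only cosmetic difference is that the paper invokes the observation (stated just before the lemma) that $P(\alpha,\beta)=0\Rightarrow P^\star(1/\overline{\alpha},1/\overline{\beta})=0$ directly, whereas you unpack that implication explicitly via the definition $P^\star(x,y)=x^{d_1}y^{d_2}\overline{P}(1/x,1/y)$.
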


\begin{proof}
For all $x\in\overline{(U\setminus S)}^{\,-1}$, $w\deq 1/\overline{x}\in U\setminus S$, and $\overline{\rho}(1/x) = \overline{\rho(w)} \ne 0$, hence $\rho^{\star}(x)$ is defined.
Since $w\in U$, $P(w,\rho(w))=0$, hence $0= P^{\star}\bigl(1/\overline{w},1/\overline{\rho}(\overline{w})\bigr) = P^{\star}\bigl(x,\rho^{\star}(x)\bigr)$.
\end{proof}

If we restrict to $x\in\UC$, then $\arg \rho^{\star}(x) = \arg \rho(x)$, and $\abs{\rho^{\star}(x)} = 1/\abs{\rho(x)}$.


\begin{lem}\label{lem:essentialset}
  Let $\gamma$ be a proper, open arc in $\UC$, and $U$ a simply connected, open subset of $\C\setminus Z(\mathcal{R})$ such that $\gamma = U\cap \UC$.  Let $\rho(x)$ be a root function for $P(x,y)$ on $U$.
\begin{enumerate}
	\item If there exist infinitely many points $x\in \gamma$ such that $\abs{\rho(x)}=1$, then $P$ must be a.s.r.
	\item If $P$ is a.s.r.\ and there exists an $\alpha\in\gamma$ such that $\abs{\rho(\alpha)}=1$, then $\abs{\rho(x)}\equiv1$ for all $x\in\gamma$.
\end{enumerate}
\end{lem}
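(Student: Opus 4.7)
The plan is to exploit the identity $\rho^{\star}(x)=1/\overline{\rho(x)}$ valid on $\UC$ (a direct consequence of \lemref{lem:betarootfunction}): for $x\in\UC$ in the common domain, $|\rho(x)|=1$ holds precisely when $\rho(x)=\rho^{\star}(x)$. Both parts of the lemma then reduce to a comparison of two holomorphic root functions via \lemref{lem:algebraic} and the identity theorem.

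For Part 2, I would first note that if $P^{\star}=cP$, then $\rho^{\star}$ is a root function not only of $P^{\star}$ but of $P$ itself. Since $|\rho(\alpha)|=1$ gives $\rho(\alpha)\ne 0$ and $1/\overline{\alpha}=\alpha$, one can choose a simply connected open neighborhood $W$ of $\alpha$ contained both in $U\setminus S$ and in $\overline{(U\setminus S)}^{\,-1}$. On $W$, both $\rho$ and $\rho^{\star}$ are holomorphic root functions of $P$ that agree at $\alpha$, so the uniqueness clause of \lemref{lem:algebraic} forces $\rho\equiv\rho^{\star}$ on $W$, and hence $|\rho|\equiv 1$ on the arc $W\cap\gamma$. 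The set $\{x\in\gamma:|\rho(x)|=1\}$ is closed in $\gamma$ by continuity and, by this same local argument applied at each of its points, open in $\gamma$ as well; connectedness of $\gamma$ then gives $|\rho|\equiv 1$ on all of $\gamma$.

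For Part 1, suppose $E:=\{x\in\gamma:|\rho(x)|=1\}$ is infinite and pick an accumulation point $\alpha'\in\cl(\gamma)\subseteq\UC$. Since $\mathcal{R}(\alpha')\ne 0$, \lemref{lem:algebraic} supplies a simply connected open neighborhood $\tilde U\ni\alpha'$ carrying root functions $\tilde\rho_{1},\ldots,\tilde\rho_{d}$ of $P$. I would then select the branch $\tilde\rho=\tilde\rho_{j}$ satisfying $\tilde\rho(x_{0})=\rho(x_{0})$ at some $x_{0}\in\gamma\cap\tilde U$ close to $\alpha'$; because the root functions of $P$ on $\tilde U$ are nowhere coincident and because both $\rho$ and $\tilde\rho$ give continuous selections of roots along the connected arc $\tilde U\cap\gamma$, $\tilde\rho$ agrees with $\rho$ on all of this arc, and in particular $|\tilde\rho|=1$ on an infinite subset of $\tilde U\cap\UC$ accumulating at $\alpha'$. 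Let $\tilde\rho^{\star}$ be the root function of $P^{\star}$ associated to $\tilde\rho$ by \lemref{lem:betarootfunction}; on this accumulating set $\tilde\rho=\tilde\rho^{\star}$, so the identity theorem on a simply connected neighborhood $W\subseteq\tilde U$ of $\alpha'$ on which both are holomorphic yields $\tilde\rho\equiv\tilde\rho^{\star}$ on $W$, hence $P^{\star}(x,\tilde\rho(x))\equiv 0$ on $W$, and by analytic continuation on the connected set $\tilde U$, on all of $\tilde U$.

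Finally, at every $x\in\tilde U$ the polynomials $P(x,\cdot)$ and $P^{\star}(x,\cdot)$ share the root $\tilde\rho(x)$, so $\Res_{y}(P,P^{\star})$ vanishes on the open set $\tilde U$ and is therefore identically zero as a polynomial in $x$, forcing $P$ and $P^{\star}$ to share a nontrivial factor in $\C[x,y]$. Since $P$ is irreducible, $P\mid P^{\star}$, and the facts $\deg_{y}P^{\star}=\deg_{y}P$ and $\deg_{x}P^{\star}=\deg_{x}P$ force the quotient to be a nonzero constant, so $P^{\star}=cP$ and $P$ is a.s.r. The step I expect to be the main obstacle is the branch-matching in Part 1 when $\alpha'$ lies on the boundary of $\gamma$ rather than in its interior, since then $\tilde U$ need not sit inside $U$ and one must carefully identify a local continuation of $\rho$ across $\alpha'$; the rest is a fairly mechanical combination of the identity theorem with the structural lemmas already in place.
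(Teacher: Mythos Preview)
Your proposal is correct, and the core mechanism---comparing $\rho$ with $\rho^{\star}$ on the unit circle---is the same one the paper uses. The main divergence is in Part~1: you pass through an accumulation point, invoke the identity theorem to force $\tilde\rho\equiv\tilde\rho^{\star}$ locally, and then deduce $\Res_{y}(P,P^{\star})\equiv 0$. The paper bypasses all of this. Having already observed that $\rho(\alpha)=\rho^{\star}(\alpha)$ for every $\alpha\in T$, it simply notes that the points $(\alpha,\rho(\alpha))$ for $\alpha\in T$ give infinitely many points in $Z(P)\cap Z(P^{\star})$; since $P$ and $P^{\star}$ are both irreducible, this alone forces $P\mid P^{\star}$ and $P^{\star}\mid P$ (two irreducible plane curves with infinite intersection coincide), hence $P$ is a.s.r. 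No accumulation, no identity theorem, no branch-matching at boundary points---precisely the step you flagged as the main obstacle evaporates. For Part~2 the two arguments are closer: you work locally near $\alpha$ and then spread by a clopen/connectedness argument, whereas the paper constructs once and for all a simply connected $V'\subseteq U\cap\overline{U}^{\,-1}$ containing all of $\gamma$ (minus finitely many points), applies \lemref{lem:algebraic} there to get $\rho\equiv\rho^{\star}$ on $V'$ in one stroke, and reads off $|\rho|\equiv 1$ on $\gamma$ from $|\rho^{\star}|=1/|\rho|$. Your local-to-global version is fine, just slightly longer.
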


\begin{proof}
Let $T\deq \rho^{-1}(\UC) \cap\gamma$, let $S$ be the set of all roots of $\rho(x)$ on $U$, and let $V$ be an open, simply connected subset of $U\cap \overline{U}^{-1}$ containing $\gamma$.)  Our assumptions on $P$ imply that $y\notdivides P$, therefore  $S$ is finite.  Let $V' = V\setminus (\overline{S}^{\,-1})$.  By \lemref{lem:betarootfunction}, $\rho^{\star}(x)$ is a root function for $P^{\star}$ on $V'$.  $T\subset V'$, and for all $\alpha\in T$, $\abs{\rho^{\star}(\alpha)} = 1/\abs{\rho(\alpha)} = 1$ and $\arg \rho^{\star}(\alpha) = \arg\rho(\alpha)$, hence $\rho(\alpha) = \rho^{\star}(\alpha)$.
\begin{enumerate}
	\item  If $T$ is infinite, then then points $\bigl(\alpha,\rho(\alpha)\bigr)$ give infinitely many points in $Z(P)\cap Z(P^{\star})$.  Since $P$ and $P^{\star}$ are both irreducible, this implies that $P\divides P^{\star}$ and $P^{\star}\divides P$ (see \cite[page 2]{shaf}), hence $P$ must be a.s.r.

	\item $P$ being a.s.r.\ implies that $\rho^{\star}(x)$ is actually a root function for $P$ on $V'$ (indeed, on $U$).  By \lemref{lem:algebraic}, distinct root functions of $P$ will never take the same value, but $\rho(\alpha) = \rho^{\star}(\alpha)$, hence $\rho(x)\equiv \rho^{\star}(x)$ on $V'$.  Since $\abs{\rho^{\star}(x)} = 1/\abs{\rho(x)}$ on $\gamma$, we conclude that $\abs{\rho(x)}\equiv 1$ on $\gamma$.  \qedhere
\end{enumerate}
\end{proof}

We will refer to root functions $\rho(x)$ that satisfy $\abs{\rho(x)}\equiv 1$ for all $x$ in some arc $\gamma\subset \UC$ as \emph{toric} root functions, since the map $x\mapsto \bigl(x, \rho(x)\bigr)$, restricted to $\gamma$, parametrizes an infinite subset of $Z(P)\cap \UC^{2}$. The lemma above asserts that $P$ being a.s.r.\ (a ``global'' property of $P$) is a necessary condition for the existence of toric root functions.  It is not a sufficient condition, although experiment suggests that most (perhaps almost all?) a.s.r.\ polynomials in $\C[x,y]$ have at least one toric root function. 



Therefore, the lemma shows that the points in $Z(P)\cap\UC^{2}$ fall into two categories: those lying in the parametrized families coming from toric root functions (which are only possible for $P$ a.s.r.), and finitely many isolated points (which are only possible for $P$ not a.s.r.).

Recall from \thmref{thm:main} that we define $\mathcal{E}$ to be $Z(P)\cap \UC^{2}$ if this is set finite, and $\mathcal{E}=\emptyset$ otherwise.  Therefore, if $P$ is not a.s.r., then $\mathcal{E}=Z(P)\cap \UC^{2}$, and if $P$ is a.s.r., then $\mathcal{E} = \emptyset$ (whether or not $P$ has toric root functions).  This shows that \cororef{cor:zeroseq} follows from \thmref{thm:main}.  It also implies that $\mathcal{E}$ coincides with the set of isolated points of $Z(P)\cap\UC^{2}$.


\begin{lem}\label{lem:algebraiccoords}
If the coefficients of $P$ are algebraic, then the coordinates of the points in $\mathcal{E}$ are algebraic.
\end{lem}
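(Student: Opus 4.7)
The plan is to split into two cases according to whether $P$ is almost self-reciprocal (a.s.r.). If $P$ is a.s.r., the discussion just before the lemma already gives $\mathcal{E} = \emptyset$, so there is nothing to prove. I therefore assume henceforth that $P$ is not a.s.r., in which case $\mathcal{E} = Z(P) \cap \UC^{2}$ and this set is finite.

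The key tool is the inclusion $Z(P) \cap \UC^{2} \subseteq Z(P) \cap Z(P^{\star})$ recorded at the beginning of this section. Because the coefficients of $P$ are algebraic and $\overline{\Q}$ is closed under complex conjugation, the definition of $P^{\star}$ places $P^{\star}$ in $\overline{\Q}[x,y]$ as well. Both $P$ and $P^{\star}$ are (absolutely) irreducible and are not scalar multiples of each other (this is exactly the failure of a.s.r.), so they are coprime in $\overline{\Q}[x,y]$. By Gauss's lemma, the resultants $\Res_{y}(P,P^{\star}) \in \overline{\Q}[x]$ and $\Res_{x}(P,P^{\star}) \in \overline{\Q}[y]$ are both nonzero.

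The conclusion then drops out: for any $(\alpha,\beta) \in \mathcal{E}$, the $x$-coordinate $\alpha$ is a root of $\Res_{y}(P,P^{\star})$---or, in the degenerate case where the leading $y$-coefficients of $P$ and $P^{\star}$ both vanish at $\alpha$, of those leading coefficients themselves, which are likewise in $\overline{\Q}[x]$---so $\alpha$ is algebraic. Given algebraic $\alpha$, the specialization $P(\alpha,y)$ is a nonzero polynomial in $\overline{\Q}[y]$ (nonvanishing because otherwise $(x-\alpha)$ would divide $P$ in $\C[x,y]$, contradicting irreducibility together with $\deg_{y} P > 0$), and $\beta$ is one of its roots, so $\beta$ is also algebraic.

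The argument is quite direct; the only real obstacle is the mild technicality about degenerate leading coefficients in the resultant step, which can alternatively be sidestepped by invoking zero-dimensionality of $Z(P) \cap Z(P^{\star})$ together with the Nullstellensatz to conclude that every point of this finite intersection has algebraic coordinates.
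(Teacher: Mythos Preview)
Your proof is correct and follows essentially the same approach as the paper: reduce to the non-a.s.r.\ case, embed $\mathcal{E}$ in $Z(P)\cap Z(P^{\star})$, and use that the coordinates are zeros of the resultants $\Res_{y}(P,P^{\star})$ and $\Res_{x}(P,P^{\star})$, which lie in $\overline{\Q}[x]$ and $\overline{\Q}[y]$. You add some extra care (the leading-coefficient degeneracy, recovering $\beta$ via the specialization $P(\alpha,y)$) that the paper omits, but the core argument is the same.
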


\begin{proof}
If $P$ is a.s.r., then $\mathcal{E} = \emptyset$, so assume otherwise.  Therefore $\mathcal{E} = Z(P)\cap \UC^{2}$, which is $\subseteq Z(P)\cap Z(P^{\star})$, as mentioned before \lemref{lem:betarootfunction}.  Since $P$ is not a.s.r., the latter set is finite.  The $x$ coordinates of all of its points are roots of the resultant $\Res_{y}(P,P^{\star})\in \overline{\Q}[x]$; the $y$ coordinates are handled similarly.
\end{proof}


\begin{lem}\label{lem:rootfnontorus}
Let $f\colon \R\to\R$ be a $C^{\infty}$ function, not identically equal to $nt$ for any integer $n$.  Assume that there exist integers $L, M$, with $L>0$, such that $f(t+2\pi L) - f(t) = 2\pi M$ for all $t\in\R$.  Let $I=[0,2\pi L]$.  Then for all $k\ge 0$ and all integers $n\ge 1$,
\[
\int_{I} \log\abs{1- \exp\bigl(i(f(t)-nt)\bigr)} \,dt = \Osub{k}(n^{-k}).
\]
\end{lem}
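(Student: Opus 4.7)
The plan is to recognize the integrand $\log|1-e^{i\phi_n(t)}|$, where $\phi_n(t) \deq f(t) - nt$, via its Fourier expansion and control each harmonic by oscillatory integration by parts. Setting $C \deq \sup_t |f'(t)|$ (finite because $f'$ is $2\pi L$-periodic), the hypothesis $f(t+2\pi L) - f(t) = 2\pi M$ gives $\phi_n(t+2\pi L) - \phi_n(t) = 2\pi(M - nL) \in 2\pi\Z$, so that $e^{im\phi_n(t)}$ is $2\pi L$-periodic in $t$ for every $m \in \Z$; simultaneously, $\phi_n'(t) = f'(t) - n$ is $2\pi L$-periodic and satisfies $|\phi_n'(t)| \geq n - C \geq n/2$ for $n \geq 2C$, so divisions by $\phi_n'$ are safe. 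Since the conclusion is trivial for the finitely many $n < 2C$ (the integral is finite by the non-degeneracy hypothesis $f \not\equiv nt$), I would focus on $n \geq 2C$.

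Starting from the uniformly convergent expansion $\log|1 - re^{i\theta}| = -\sum_{m\geq 1} r^m \cos(m\theta)/m$, valid for $r \in [0,1)$, integration term-by-term over $I$ gives
\[
\int_I \log\bigl|1 - re^{i\phi_n(t)}\bigr|\,dt = -\sum_{m=1}^\infty \frac{r^m}{m}\,c_m(n), \qquad c_m(n) \deq \int_I \cos\bigl(m\phi_n(t)\bigr)\,dt.
\]
The key estimate is rapid decay of $c_m(n)$. Using $\cos(m\phi_n) = (m\phi_n')^{-1}(d/dt)\sin(m\phi_n)$ (and similarly with sine and cosine swapped), integration by parts produces no boundary contribution thanks to the $2\pi L$-periodicity above. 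Iterating $j$ times and absorbing signs,
\[
|c_m(n)| \leq \int_I \bigl|M^j[1](t)\bigr|\,dt, \qquad M[h] \deq \frac{d}{dt}\biggl(\frac{h}{m\phi_n'}\biggr),
\]
and a straightforward induction on $j$ shows $|M^j[1](t)| \leq K_j/(m^j n^{j+1})$ uniformly in $t$ and $n \geq 2C$, with $K_j$ depending only on $j$ and on $\sup_I|f^{(\ell)}|$ for $2 \leq \ell \leq j+1$. Hence $|c_m(n)| \leq 2\pi L\,K_j/(m^j n^{j+1})$ for every $j \geq 1$.

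It remains to let $r \to 1^-$. On the series side, the bound with $j=2$ gives $|r^m c_m(n)/m| \leq K/(m^3 n^3)$ uniformly in $r$, so dominated convergence in $m$ yields $-\sum_m r^m c_m(n)/m \to -\sum_m c_m(n)/m$. On the integral side, I would use the elementary inequality $|1 - re^{i\theta}| \geq r\,|1 - e^{i\theta}|$ (which follows from $(1-r)(1 + r - 2r\cos\theta) \geq 0$) to deduce, for $r \in [1/2, 1]$,
\[
\bigl|\log|1 - re^{i\phi_n(t)}|\bigr| \leq \bigl|\log|1 - e^{i\phi_n(t)}|\bigr| + 2\log 2.
\]
The right-hand side is integrable over $I$: substituting $u = \phi_n(t)$ (valid since $\phi_n$ is a monotone diffeomorphism on $I$ for $n > C$) and using the $2\pi$-periodicity and integrability of $|\log|1 - e^{iu}||$ gives $\int_I |\log|1-e^{i\phi_n(t)}||\,dt = O(1)$. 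Dominated convergence then yields
\[
\int_I \log|1-e^{i\phi_n(t)}|\,dt = -\sum_{m=1}^\infty \frac{c_m(n)}{m},
\]
whose absolute value is $O_j(n^{-(j+1)})$ for every $j \geq 1$, implying the claimed $O_k(n^{-k})$ for all $k \geq 0$.

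The main anticipated obstacle is the $r \to 1^-$ passage on the integral side: constructing an $r$-uniform integrable majorant requires the sharp pointwise inequality $|1-re^{i\theta}| \geq r|1-e^{i\theta}|$, and then the integrability of $|\log|1 - e^{i\phi_n(t)}||$ on $I$ must be verified via a change of variable against the $2\pi$-periodic function $|\log|1-e^{iu}||$. The integration-by-parts bookkeeping for the bound $|M^j[1]| = O(m^{-j}n^{-(j+1)})$ is routine but must be organized as a careful induction so that the implied constants depend only on $j$ and $f$, and not on $m$ or $n$.
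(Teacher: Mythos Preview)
Your proof is correct, but it organizes the integration-by-parts argument differently from the paper.  The paper works directly with the polylogarithm functions $G_k(t)\deq\Li{k}(e^{it})$: writing $b_n=-\Re\bigl[\int_I G_1(\phi_n(t))\,dt\bigr]$ and using $G_{k+1}'=iG_k$, it integrates by parts $k$ times against the whole integrand at once, producing $b_n=-\Re\bigl[i^{k-1}\int_I h_k(n,t)\,G_k(\phi_n(t))\,dt\bigr]$ with amplitudes $h_k(n,t)=\Osub{k}(n^{-(k-1)})$; since $G_k$ is bounded for $k\ge 2$, the estimate follows.  You instead expand $\log|1-re^{i\phi_n}|$ into its Fourier harmonics, apply the non-stationary-phase bound $c_m(n)=\Osub{j}\bigl(m^{-j}n^{-(j+1)}\bigr)$ to each harmonic separately, and then sum and pass to the boundary $r\to 1^{-}$ via dominated convergence.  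The two computations are essentially dual---your bound on $c_m(n)$ is what the paper's polylogarithm recursion encodes after summing in $m$---but the paper's packaging avoids the extra limiting step entirely, while your route is more elementary in that it uses nothing beyond the Taylor series of $\log(1-z)$ and standard oscillatory-integral estimates.  Your careful justification of the $r\to1^-$ passage (the inequality $|1-re^{i\theta}|\ge r|1-e^{i\theta}|$ together with the change of variable $u=\phi_n(t)$ to check integrability of the majorant) is a genuine piece of work that the paper sidesteps, and it is done correctly.
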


\begin{proof}
Our assumptions on $f$ imply that for all $j\ge 1$, $f^{(j)}(t)$ is periodic, hence bounded.  Let $C = \sup_{t\in\R} \abs{f'(t)}$.  
By choosing the implicit constant sufficiently large, the claimed bound is trivially true for $1\le n\le 2C$.  We may therefore restrict to integers $n>2C$.

Recall that the $k$-th polylogarithm function, $\Li{k}(z)$, is continuous on the closed unit disk for $k\ge 1$, with the exception of a singularity at $z=1$ when $k=1$.  (Indeed, $\Li{1}(z) = -\log(1-z)$, using the principal branch of the logarithm for $\abs{z}\le 1$.)  Further, the functions $G_{k}(t) \deq \Li{k}\bigl(\exp(it)\bigr)$ are differentiable on $\R\setminus 2\pi\Z$ for $k\ge 1$, satisfying $G_{k+1}'(t) = i \,G_{k}(t)$.

We define a sequence of functions $\{h_{k}(x,t)\}$ for $k\ge 1$ as follows: let $h_{1}(x,t) = 1$, and for $k\ge 2$, let $h_{k}(x,t) = \frac{\partial}{\partial t}\bigl[h_{k-1}(x,t)/(f'(t)-x)\bigr]$.  By induction, for each $k\ge 1$, we may write $h_{k}(x,t)=P_{k}(x,t)/(x-f'(t))^{2k-2}$, where $P_{k}(x,t)$ is, with respect to $x$, a polynomial of degree $k-1$ whose coefficients are polynomial expressions in $f'(t), f''(t), \dotsc$.    Therefore, substituting $x=n > 2C$, we have $h_{k}(n,t) = \Osub{k}(n^{-(k-1)})$ for $k\ge 1$.

Let $b_{n}\deq \int_{I} \log\abs{1- \exp\bigl(i(f(t)-nt)\bigr)} \,dt$.  (This is finite, as $f(t)\not\equiv nt$.)  Then
\[
b_{n} =  -\Re\biggl[\int_{I} h_{1}(n,t) G_{1}\bigl(f(t)-nt\bigr) \,dt\biggr].
\]
For each $k\ge 1$,
\[
\int_{I} h_{k}(n,t) G_{k}\bigl(f(t)-nt\bigr) dt
	= \int_{I} \frac{-i \,h_{k}(n,t)}{\bigl(f'(t) - n\bigr)} \frac{d}{dt}\Bigl[G_{k+1}\bigl(f(t)-nt\bigr)\Bigr] dt.
\]
Integrating by parts,
\[
= \frac{-i \,h_{k}(n,t)}{\bigl(f'(t) - n\bigr)}G_{k+1}\bigl(f(t)-nt\bigr) \biggr|_{0}^{2\pi L}
	+ i \!\int_{I} h_{k+1}(n,t) G_{k+1}\bigl(f(t)-nt\bigr) dt.
\]
By the properties of $f$, $-i\, h_{k}(n,t) G_{k+1}\bigl(f(t)-nt\bigr) / \bigl(f'(t) - n\bigr)$ takes on the same values at $t=0$ and at $t=2\pi L$.  Therefore,
\[
\int_{I} h_{k}(n,t) G_{k}\bigl(f(t)-nt\bigr) dt =  i \!\int_{I} h_{k+1}(n,t) G_{k+1}\bigl(f(t)-nt\bigr) dt.
\]
Hence, for all $k\ge 1$,
\[
b_{n} =  - \Re\biggl[i^{k-1} \!\int_{I} h_{k}(n,t) G_{k}\bigl(f(t)-nt\bigr) \,dt\biggr] \\
	= \Osub{k}(n^{-(k-1)}).  \qedhere
\]
\end{proof}


\begin{lem}\label{lem:parts}
  Let $\tildeD{}=x \frac{d}{dx}$.
  For $\gamma$ an arc, $k$ a nonnegative integer, $f(x)$ a $C^{k}$ function on a neighborhood of the closure of $\gamma$, and $N$ a nonzero integer,
\begin{equation*}
\begin{split}
    \int_\gamma x^N f(x)\,\frac{dx}{x} &\;=\; -\sum_{r=1}^{k}
    	\frac{1}{(-N)^{r}} \eval{x^{N} \tildeD{r-1}(f)}{\gamma} \\
    & \quad \;\;\;+\,\frac{1}{(-N)^{k}} \int_\gamma x^{N} \tildeD{k}(f)\,\frac{dx}{x}.
\end{split}
\end{equation*}
\end{lem}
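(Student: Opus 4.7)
The plan is to prove the identity by induction on $k \ge 0$. The base case $k = 0$ is immediate: the sum $\sum_{r=1}^{0}$ is empty, $(-N)^{0} = 1$, and $\tildeD{0}(f) = f$, so the right side reduces to $\int_{\gamma} x^{N} f(x)\,\frac{dx}{x}$, matching the left side.

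For the inductive step, I first establish a single-step integration by parts. Since $\frac{d}{dx}(x^{N}) = N x^{N-1}$, we may write $x^{N}\,\frac{dx}{x} = \frac{1}{N}\,d(x^{N})$. For any $C^{1}$ function $g$ on a neighborhood of $\cl(\gamma)$, ordinary integration by parts and the identity $x g'(x) = \tildeD{}(g)$ give
\[
\int_{\gamma} x^{N} g(x)\,\frac{dx}{x}
\;=\; \frac{1}{N}\,\eval{x^{N} g(x)}{\gamma} - \frac{1}{N}\int_{\gamma} x^{N} \tildeD{}(g)\,\frac{dx}{x}
\;=\; -\frac{1}{-N}\,\eval{x^{N} g(x)}{\gamma} + \frac{1}{-N}\int_{\gamma} x^{N} \tildeD{}(g)\,\frac{dx}{x}.
\]

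Now assume the lemma holds for some $k \ge 0$, with $f\in C^{k+1}$ near $\cl(\gamma)$. Apply the one-step IBP above to $g = \tildeD{k}(f)$ (which is $C^{1}$) to rewrite the remaining integral, then multiply through by $1/(-N)^{k}$. The new boundary term picks up a factor $1/(-N)^{k+1}$ and becomes precisely the missing $r = k+1$ summand (since $\tildeD{(k+1)-1}(f) = \tildeD{k}(f)$), while the new integral has prefactor $1/(-N)^{k+1}$ and integrand $x^{N}\tildeD{k+1}(f)$, matching the trailing term in the $k+1$ version of the formula. Assembling the pieces yields the claimed identity for $k+1$.

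There is no real obstacle beyond careful bookkeeping of the signs and powers of $-N$. The only subtlety worth flagging is the meaning of $\eval{\cdot}{\gamma}$, which is defined via a one-sided limit to accommodate a closed loop $\gamma = \UC$ with a potentially discontinuous integrand. Under the hypothesis that $f\in C^{k}$ on a neighborhood of $\cl(\gamma)$, each integrand $x^{N}\tildeD{r-1}(f)$ is continuous on $\cl(\gamma)$, so the limit reduces to the ordinary endpoint difference, and in particular vanishes when $\gamma = \UC$.
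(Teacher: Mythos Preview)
Your proof is correct and follows exactly the approach the paper indicates: the paper simply states that the proof is an easy induction using integration by parts, and you have carried this out in full detail, including the correct one-step identity and the sign bookkeeping with powers of $-N$.
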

The proof is an easy induction, using integration by parts.


\begin{lem}\label{lem:dibrunocor}
Let $k$ be a nonnegative integer and $f$ a $C^{k}$ function on an open subset $U$ of $\C$.  For all integers $m$,
\begin{equation}\label{eqn:diBrunoLem}
\h{m}{k} f(z)
    = f(z)^{m-k}\sum_{j=0}^k \phi_{k,j,f}(z) m^j,
\end{equation}
where $\phi_{k,j,f}(z)\deq\Phi_{k,j}\bigl(f,zf',z^2 f'',\ldots, z^{k-j+1}f^{(k-j+1)}\bigr)$ for $j=0,\dotsc, k$.  (In particular, $\phi_{k,j,f}(z)$ is continuous on $U$ for all $j$.)
\end{lem}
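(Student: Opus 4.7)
The plan is to expand $\h{m}{k}f = (z\frac{d}{dz})^{k}(f^{m})$ by converting the Euler operator into ordinary derivatives, applying Fa\`a di Bruno's formula, and then reorganizing the resulting double sum as a polynomial in $m$ whose coefficients are exactly the $\Phi_{k,j}$ specializations.

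First I would invoke the classical identity expressing iterates of $z\frac{d}{dz}$ in terms of ordinary derivatives via Stirling numbers of the second kind,
\[
\Bigl(z\frac{d}{dz}\Bigr)^{k} g(z) \;=\; \sum_{j=0}^{k} \stirtwo{k}{j} z^{j} g^{(j)}(z),
\]
applied to $g=f^{m}$. Next, I would apply Fa\`a di Bruno's formula with outer function $y\mapsto y^{m}$, whose $i$-th derivative is the falling factorial $(m)_{i}\, y^{m-i}$, obtaining
\[
\frac{d^{j}}{dz^{j}}(f^{m}) \;=\; \sum_{i=0}^{j} (m)_{i}\, f^{m-i}\, B_{j,i}\bigl(f', f'', \ldots, f^{(j-i+1)}\bigr).
\]

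Then I would convert the falling factorial to an honest polynomial in $m$ via the signed Stirling first-kind identity
\[
(m)_{i} \;=\; \sum_{\ell=0}^{i} (-1)^{i-\ell} \stirone{i}{\ell} m^{\ell},
\]
and use the homogeneity of the Bell polynomials $B_{j,i}$ (they are of weighted degree $j$ when the $a$-th variable carries weight $a$) to absorb the factor $z^{j}$ into the arguments:
\[
z^{j} B_{j,i}\bigl(f', f'', \ldots, f^{(j-i+1)}\bigr) \;=\; B_{j,i}\bigl(zf', z^{2}f'', \ldots, z^{j-i+1} f^{(j-i+1)}\bigr).
\]

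Finally I would swap the order of summation so that $\ell$ (the power of $m$) is outermost, factor out $f^{m-k}$ by writing $f^{m-i} = f^{m-k}\, f^{k-i}$, and read off the inner sum as $\Phi_{k,\ell}$ evaluated at $(f, zf', z^{2}f'', \ldots)$, since the definition of $\Phi_{n,k}$ in \eqref{eq:Psidef}'s lead-up matches the combination $\sum_{i,j}(-1)^{i-\ell}\stirone{i}{\ell}\stirtwo{k}{j} y_{0}^{k-i} B_{j,i}(y_{1},\ldots)$ term-for-term. The support condition $0\le \ell\le i\le j\le k$ built into $\Phi_{n,k}$ matches the natural ranges from Fa\`a di Bruno and Stirling expansions, so no boundary terms are lost. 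There is no real obstacle beyond bookkeeping; the only delicate point is verifying that the reindexing and homogeneity step genuinely reproduce $\Phi_{k,\ell}$ rather than some relative of it, which is a matter of comparing definitions carefully.
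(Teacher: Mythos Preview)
Your approach is exactly the paper's: convert $(z\,d/dz)^{k}$ to $\sum_{j}\stirtwosm{k}{j}z^{j}D^{j}$, apply Fa\`a di Bruno to $D^{j}(f^{m})$ via Bell polynomials, expand $(m)_{i}$ in powers of $m$ using signed Stirling numbers of the first kind, absorb $z^{j}$ into the Bell arguments by weight-homogeneity, and regroup to recognize $\Phi_{k,\ell}$. The only point you omit is the parenthetical continuity claim: $\phi_{k,0,f}$ nominally involves $f^{(k+1)}$, which need not exist for $f\in C^{k}$, so you should note (as the paper does) that $\Phi_{k,0}$ is actually the constant $0$ for $k\ge 1$ and $1$ for $k=0$, making $\phi_{k,0,f}$ trivially continuous.
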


\begin{proof}

Letting $D=\frac{d}{dz}$, an easy induction shows that
\begin{equation}\label{eqn:zDiterated}
(z D)^{k} = \sum_{i}\stirtwosm{k}{i} z^{i} D^{i},
\end{equation}
using the recurrence $\stirtwosm{k}{i} = i \stirtwosm{k-1}{i}+\stirtwosm{k-1}{i-1}$ (valid for all $k$ and $i$ except $k=i=0$) \cite[Equation~1.6.3]{wilf}.

It follows immediately from Fa\`a di Bruno's formula, written in terms of Bell polynomials \cite[Section~3.4, Theorem~A]{comtet1}, that
\begin{equation}\label{eqn:diBruno}
D^{i} \bigl(f(z)^{m}\bigr) = \sum_{l=0}^i (m)_{l} f(z)^{m-l} B_{i,l}(f',f'',\ldots,f^{(i-l+1)}).
\end{equation}
Using the identity $(x)_l = \sum_{j=0}^l (-1)^{l-j} \stironesm{l}{j}x^j$ \cite[Equation~3.5.2]{wilf}, together with \eqref{eqn:zDiterated} and \eqref{eqn:diBruno}, we have
\[
\h{m}{k} f(z) = \sum_{i,l,j}\stirtwosm{k}{i} z^{i} 
	(-1)^{l-j} \stironesm{l}{j} m^j f(z)^{m-l} B_{i,l}(f',f'',\ldots,f^{(i-l+1)}).
\]
By definition, every monomial in $B_{i,l}(y_0,\dotsc,y_{i-l+1})$ has ``weight'' (the sum of the variables' subscripts, with multiplicity) equal to $i$.  Therefore,
\[
	z^{i} B_{i,l}(f',f'',\ldots,f^{(i-l+1)}) = B_{i,l}(z f',z^{2} f'',\ldots,z^{i-l+1} f^{(i-l+1)}).
\]
Together with the previous equation, this proves the formula in the Lemma.

Since $f$ is $C^{k}$, the continuity of the functions $\phi_{k,j,f}(z)$ is clear, except that $\phi_{k,0,f}(z)$ appears to depend on $f^{(k+1)}$.  However, this is illusory, since $\Phi_{k,0}$ is in fact constant, equal to either 0 or 1, for each $k$. \qedhere
\end{proof}


\begin{cor}\label{cor:dibrunoBigO}
  Let $k, m$ be integers with $m\ge 1$ and $k\ge 0$, and let $f$ be a $C^k$ function on an open neighborhood of some compact set $S\subset \C$.  Then for $z\in S$ at which $f(z)\ne 0$,
  \[
  \h{m}{k} f(z) = \Osub{f,S,k}\bigl(m^k \abs{f(z)}^{m-k}\bigr).
  \]
\end{cor}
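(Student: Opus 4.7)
The corollary follows almost immediately from Lemma~\ref{lem:dibrunocor}, so my plan is short. First I would invoke that lemma to write, for $z$ in the open neighborhood of $S$ on which $f$ is $C^{k}$,
\[
\h{m}{k} f(z) = f(z)^{m-k}\sum_{j=0}^{k} \phi_{k,j,f}(z)\, m^{j}.
\]
This is already the desired structural form: a fixed power of $f(z)$ multiplied by a polynomial in $m$ of degree $k$ whose coefficients are functions of $z$ alone (independent of $m$). The only remaining task is to bound the sum uniformly in $z$ on the compact set $S$ and uniformly in $m\ge 1$.

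Next I would use the continuity of each $\phi_{k,j,f}$ (the second conclusion of Lemma~\ref{lem:dibrunocor}) together with compactness of $S$ to set
\[
M_{j} \deq \sup_{z\in S}|\phi_{k,j,f}(z)| < \infty, \qquad C \deq \sum_{j=0}^{k} M_{j},
\]
a finite constant depending only on $f$, $S$, and $k$. Since $m\ge 1$, we have $m^{j}\le m^{k}$ for $0\le j\le k$, so
\[
\Bigl|\h{m}{k} f(z)\Bigr| = |f(z)|^{m-k}\Bigl|\sum_{j=0}^{k} \phi_{k,j,f}(z)\, m^{j}\Bigr| \le C\, m^{k}\, |f(z)|^{m-k}
\]
for every $z\in S$ with $f(z)\ne 0$, which is precisely the claimed bound.

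There is essentially no obstacle here; the one point worth flagging is that when $m-k<0$ the factor $|f(z)|^{m-k}$ blows up as $f(z)\to 0$, so the bound is not uniform in $z$ by itself—but this is built into the statement, since $|f(z)|^{m-k}$ sits on the right-hand side and the implicit constant is required only to be uniform over $z\in S$ (not over how close $f(z)$ may be to zero). All the genuine work was done inside Lemma~\ref{lem:dibrunocor}; this corollary is simply the packaging of that identity into an $O$-estimate suitable for later use.
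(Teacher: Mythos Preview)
Your argument is correct and is exactly the intended one: the paper states this corollary without proof, treating it as immediate from Lemma~\ref{lem:dibrunocor} via continuity of the $\phi_{k,j,f}$ on the compact set $S$ and the trivial bound $m^{j}\le m^{k}$ for $m\ge 1$.
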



\begin{lem}\label{lem:realanalytic}
Let $I$ be an interval, $\gamma =$ the image of $I$ under the map $t\mapsto e^{it}$, and $f$ a function holomorphic on $\gamma$.  Let $g(t)\deq |f(e^{it})|$.  Then there exists an open interval $J\supseteq I$ such that $g$ is real analytic on $J\setminus\{t\in\R\,|\,g(t)=0\}$ and continuous on $J$.
\end{lem}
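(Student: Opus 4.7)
The plan is to reduce the claim to the elementary fact that $\sqrt{\,\cdot\,}$ is continuous on $[0,\infty)$ and real analytic on $(0,\infty)$, applied to $g(t)^{2}$ rather than $g(t)$ itself.

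First I would use the standard convention that ``$f$ holomorphic on $\gamma$'' means $f$ extends to a holomorphic function on some open set $U\subset\C$ containing $\gamma$. By continuity of $t\mapsto e^{it}$, I can then select an open interval $J\supseteq I$ with $e^{it}\in U$ for every $t\in J$; when $I$ is unbounded I would build $J$ as the union of such tubes taken around compact subintervals exhausting $I$.

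Next I would observe that $t\mapsto e^{it}$ is real analytic as a map $\R\to\C$ (its real and imaginary parts being $\cos t$ and $\sin t$), so composing with the holomorphic $f$ yields a $\C$-valued real analytic function $F(t)\deq f(e^{it})$ on $J$. The complex conjugate $\overline{F(t)}$ --- equivalently $\overline{f}(e^{-it})$ in the notation of section~\ref{sec:notation} --- is likewise real analytic on $J$. Hence
\[
g(t)^{2}=F(t)\,\overline{F(t)}
\]
is a real analytic, nonnegative, real-valued function on $J$.

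Finally, I would invoke continuity of $\sqrt{\,\cdot\,}$ on $[0,\infty)$ to conclude that $g=\sqrt{g^{2}}$ is continuous on $J$, and real analyticity of $\sqrt{\,\cdot\,}$ on $(0,\infty)$ to conclude that $g$ is real analytic at every $t_{0}\in J$ with $g(t_{0})\ne 0$ (for then $g(t_{0})^{2}>0$, so the square root is real analytic in a neighborhood of $g(t_{0})^{2}$). No genuine obstacle arises; the only minor technicality is the selection of $J$ in the unbounded case, which is handled routinely by the tube construction described above.
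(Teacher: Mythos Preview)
Your proposal is correct and follows essentially the same route as the paper: both arguments pick an open interval $J\supseteq I$ mapping into the domain of holomorphy of $f$, write $g(t)^{2}=f(e^{it})\,\overline{f}(e^{-it})$ as a real analytic function on $J$, and then invoke the real analyticity of $\sqrt{\,\cdot\,}$ on $(0,\infty)$ together with its continuity on $[0,\infty)$. The paper simply takes $J$ to be the connected component of the preimage of the domain of $f$ containing $I$, which handles the bounded and unbounded cases uniformly and is a slightly cleaner substitute for your tube construction.
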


\begin{proof}
Let $V$ be an open neighborhood of $\gamma$ on which $f$ is holomorphic, let $U$ be the preimage of $V$ under the map $t\mapsto e^{it}$, and let $J$ be the connected component of $U\cap\R$ that contains $I$.  Since $g(t)^2 = f(e^{it})\overline{f}(e^{-it})$ on $J$ and $\sqrt{x}$ is real analytic on $(0,\infty)$, the claim follows.
\end{proof}

\begin{lem}\label{lem:integralbounds}
Let $I$ be an interval, $\gamma =$ the image of $I$ under the map $t\mapsto e^{it}$, and $f$ a function holomorphic on the closure of $\gamma$.  Suppose that $|f(z)|\leq 1$ on $\gamma$, with equality at only finitely many points in $\gamma$.  There exists a $\delta>0$ such that for all positive integers $m$ and nonnegative integers $k$,
  \[
  \int_I \bigl|\h{m}{k}(f)(e^{it})\bigr|dt = \Osub{f,I,k}\bigl(m^{k-\delta}\bigr).
  \]
\end{lem}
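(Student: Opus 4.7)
The plan is to apply Corollary~\ref{cor:dibrunoBigO} to reduce the problem to a bound on $\int_I \abs{f(e^{it})}^{m-k}\,dt$, and then to exploit the real analyticity of $\abs{f(e^{it})}$ furnished by Lemma~\ref{lem:realanalytic} to extract a power-of-$m$ decay from the finitely many points at which this quantity equals $1$. I first reduce to the nontrivial range $m>k$: for $m\le k$ (finitely many values of $m$), the integrand $\abs{\h{m}{k}f(e^{it})}$ is a bounded polynomial expression in $f,f',\dotsc,f^{(k)}$ on the compact arc $\gamma$, so the integral is $O_{f,I,k}(1)$, comfortably $\Osub{f,I,k}(m^{k-\delta})$ for any fixed $\delta$.

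Assuming $f\not\equiv 0$, its zeros on $\gamma$ form a finite set $\{e^{it_j^0}\}_{j=1}^p$ of multiplicities $q_j\ge 1$, and by hypothesis the unimodular level set $\{t_1,\dotsc,t_s\}\subset I$ at which $\abs{f(e^{it})}=1$ is finite. Let $I'$ be $I$ minus small open neighborhoods of each $t_j^0$ and each $t_j$, so that $\eta'\le\abs{f(e^{it})}\le 1-\eta$ on $I'$ for some $\eta,\eta'>0$. On $I'$ the Corollary gives
\[
\int_{I'}\abs{\h{m}{k}f(e^{it})}\,dt\;\le\; C_{f,k}\,m^{k}(1-\eta)^{m-k}\abs{I},
\]
which decays exponentially in $m$ and is absorbed into any $\Osub{f,I,k}(m^{k-\delta})$ bound.

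Near each maximum $t_j$, Lemma~\ref{lem:realanalytic} ensures that $g(t)\deq\abs{f(e^{it})}$ is real analytic on a neighborhood of $t_j$, and the maximum at $t_j$ together with positivity of $1-g$ forces an expansion $1-g(t)=c_j(t-t_j)^{2N_j}+O\bigl((t-t_j)^{2N_j+1}\bigr)$ for some $c_j>0$ and integer $N_j\ge 1$. Consequently, for $t$ close to $t_j$, $g(t)^{m-k}\le\exp\bigl(-c_j'(m-k)(t-t_j)^{2N_j}\bigr)$, and the substitution $u=\bigl((m-k)c_j'\bigr)^{1/(2N_j)}(t-t_j)$ yields $\int_{\abs{t-t_j}<\varepsilon}g(t)^{m-k}\,dt=O\bigl(m^{-1/(2N_j)}\bigr)$. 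Multiplying by the $m^k$ prefactor from Corollary~\ref{cor:dibrunoBigO} produces an $O\bigl(m^{k-1/(2N_j)}\bigr)$ contribution. Near each zero $t_j^0$, factor $f(z)=(z-e^{it_j^0})^{q_j}h(z)$ with $h$ holomorphic and nonvanishing, and apply the Leibniz rule for the derivation $z\,d/dz$ to $f^m=(z-e^{it_j^0})^{q_jm}h^m$; combined with the expansion $(zd/dz)^j=\sum_i\stirtwo{j}{i}z^i(d/dz)^i$ this yields $\abs{\h{m}{k}f(e^{it})}=O_{f,k}\bigl(m^k\abs{t-t_j^0}^{q_jm-k}\bigr)$ for $q_jm\ge k$, and integration over a small neighborhood gives an $O(m^{k-1})$ contribution.

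Summing the three contributions and setting $\delta\deq\min\bigl(1,\tfrac{1}{2N_1},\dotsc,\tfrac{1}{2N_s}\bigr)$ (with $\delta=1$ when $s=0$) yields the claimed bound. The main obstacle is the local analysis at the unimodular maxima: one must leverage the real-analytic expansion of $1-g(t)$ to the correct order and execute the rescaling substitution carefully enough to extract precisely $m^{-1/(2N_j)}$, since a weaker estimate (e.g.\ merely $\int g^{m-k}\to 0$) would not beat the $m^k$ prefactor. By comparison, the contributions from zeros and from the bulk $I'$ are routine.
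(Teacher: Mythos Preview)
Your approach is essentially the paper's: apply Corollary~\ref{cor:dibrunoBigO} to reduce to bounding $\int_I g(t)^{m-k}\,dt$ with $g(t)=\abs{f(e^{it})}$, then use the real analyticity from Lemma~\ref{lem:realanalytic} near each point where $g=1$ to extract an $O(m^{-1/\nu})$ gain via a rescaling substitution, while the remainder of $I$ contributes exponentially small terms.

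Two minor points. First, the separate treatment of the zeros of $f$ is unnecessary: for $m\ge k$ the identity $\h{m}{k}f=f(z)^{m-k}\sum_{j}\phi_{k,j,f}(z)\,m^{j}$ from Lemma~\ref{lem:dibrunocor} (with the $\phi_{k,j,f}$ continuous) shows that the bound in Corollary~\ref{cor:dibrunoBigO} holds on all of $\gamma$, zeros included; the paper simply restricts to $m\ge 2k$ and replaces the exponent $m-k$ by $m/2$. Second, your assertion that the order of vanishing of $1-g$ at $t_j$ is even ($2N_j$) need not hold when $t_j$ is an endpoint of $I$; the paper writes the order as an unspecified $\nu\ge 1$, and your rescaling argument goes through verbatim with $2N_j$ replaced by $\nu_j$.
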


\begin{proof}
We may restrict our attention to $m\ge 2k$.  Indeed, regardless of what $\delta$ we choose below, by making the implicit constant large enough, we can trivially ensure that the bound is met for $m\in\{1,\dotsc,2k-1\}$.

Let $g(t)=|f(e^{it})|$.  Since $m-k\ge m/2$, \cororef{cor:dibrunoBigO} implies that
\begin{equation*}
\begin{split}
  \int_I \bigl|\h{m}{k}(f)(e^{it})\bigr|dt &= \Osub{f,I,k}\biggl(m^k\int_{I} |f(e^{it})|^{m-k} dt \biggr) \\
  &= \Osub{f,I,k}\biggl(m^k\int_{I}g(t)^{m/2} dt \biggr).
\end{split}
\end{equation*}
It suffices to show that the remaining integral is $\Osub{f,I}(m^{-\delta})$ for some $\delta>0$.

By subdividing $I$ at the (finitely many) values of $t$ where $g(t)=1$ and making a linear change of variables on each subinterval, we may assume without loss of generality that $I$ has the form $[0,b]$ for some $b>0$, and that $g(t)<1$ on $(0,b]$.

If $g(0)=1$, then by \lemref{lem:realanalytic}, there exists $r>0$ such that $g$ is real analytic on $(-r,r)$.  We may take $r<b$.  (If $g(0)<1$, let $r=0$.)

First consider the interval $[r,b]$.  Since $g(t)$ is continuous and $<1$ there, there exists a constant $\lambda<1$ such that $0\le g(t) \le \lambda$ on $[r,b]$.  Hence
\[
\int_{r}^b g(t)^{m/2}dt \le b \lambda^{m/2},
\]
which is $\o(m^{-\delta})$ for every $\delta>0$.

If $g(0)\ne 1$, we are finished.
Otherwise, it remains to bound the integral on $[0,r]$.  Let $\nu$ equal the order of vanishing of $g(t)-1$ at $0$.\footnote{If $f(z)$ is a rational function, it can be shown that $\nu$ is at most twice the maximum of the degrees of the numerator and denominator of $f$.}  There exists a $c>0$ such that $g(t) \le 1-ct^\nu \le \exp(-c t^{\nu})$ for all $t\in [0,r]$.  Therefore,
\[
\int_{0}^{r} g(t)^{m/2}dt 
	\le \int_{0}^{\infty} \! \exp(-cmt^{\nu}/2) \,dt
	= (cm/2)^{-\frac{1}{\nu}} \Gamma\bigl(1+\tfrac{1}{\nu}\bigr)
	= \Osub{f,I}(m^{-\frac{1}{\nu}}).
\]
Letting $\delta = 1/\nu$, we obtain the desired bound.
\end{proof}


Later on, when we apply \lemref{lem:dibrunocor}, the function $f$ will be one of the root functions $\rho(x)$ of $P$.  As it can be difficult (or impossible) to obtain explicit expressions for such algebraic functions, let alone their higher derivatives, the following generalization of implicit differentiation (see \cite{wilde}, \cite{comtet1}, \cite{comtet2}, \cite{comtetfiolet}) is useful.  Recall the polynomials $Q_{n}$ defined in section~\ref{sec:notation}.

\begin{lem}\label{lem:implicitderivs}
Let $\rho(x)$ be a root function for $P(x,y)$ on some open subset of $\C$.  For all integers $n\ge 1$,
\[
\rho^{(n)}(x) = Q_{n}/w_{0,1}^{{2n-1}},
\]
after making the substitutions
	$w_{i,j} = \frac{\partial^{i+j}}{\partial x^{i} \partial y^{j}} P(x,y)$ for all $i,j$ and $y=\rho(x)$.
\end{lem}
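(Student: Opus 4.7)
The plan is induction on $n$, starting from the defining identity $P(x,\rho(x))\equiv 0$ and differentiating it repeatedly with respect to $x$. On the domain where $\rho$ is holomorphic (cf.\ \lemref{lem:algebraic}), $w_{0,1}=P_y(x,\rho(x))$ never vanishes, so division by powers of $w_{0,1}$ is legitimate.

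For the base case $n=1$, the chain rule applied to $P(x,\rho(x))=0$ gives $P_x+P_y\rho'(x)=0$, hence $\rho'(x)=-w_{1,0}/w_{0,1}$, matching $Q_1/w_{0,1}^{2(1)-1}$ since $Q_1=-w_{1,0}$. For the inductive step, I would apply $d^n/dx^n$ to $P(x,\rho(x))=0$, regarded as a composition $P\circ(x,\rho(x))$, and expand it via the multivariate Fa\`a di Bruno formula (Comtet \cite{comtet1}, Section~3.4). This expresses the $n$th derivative as a sum indexed by nonnegative-integer tuples $\{e_{i,j}\}$ that record how many factors of $\partial^{i+j}P/\partial x^i\partial y^j$ occur in each product, multiplied by derivatives $\rho^{(k)}$ for various $k\le n$. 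Exactly one term contains $\rho^{(n)}$ itself, and it has coefficient $w_{0,1}$; solving for $\rho^{(n)}$ and substituting the inductive hypothesis $\rho^{(k)}=Q_k/w_{0,1}^{2k-1}$ for every $k<n$ expresses $\rho^{(n)}$ entirely in terms of the $w_{i,j}$.

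The main obstacle is matching the combinatorial weights that emerge to the coefficients $b_{n,\mathbf{e}}$ and verifying the three defining constraints of $E_n$. After clearing all denominators: tracking the number of $x$-derivatives across the Fa\`a di Bruno expansion and the substituted $Q_k$'s yields $\sum_{i,j} i\,e_{i,j}=n$; tracking the $y$-derivatives yields $\sum_{i,j} j\,e_{i,j}=2n-2$; and counting the total number of factors in each monomial yields $\sum_{i,j} e_{i,j}=2n-1$. The multinomial coefficients from Fa\`a di Bruno combine with the factors contributed by the recursively substituted $Q_k$'s to give precisely $b_{n,\mathbf{e}}$.

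Because this inductive bookkeeping is intricate, a cleaner route is to bypass induction entirely by invoking Lagrange inversion at a regular point $(x_0,y_0)$ of $P$: in a neighborhood of such a point, $\rho$ is expressible as a convergent power series whose Taylor coefficients can be read off from the Lagrange--B\"urmann formula, yielding the stated closed form directly. This is carried out in Wilde \cite{wilde} (which also proves integrality of $b_{n,\mathbf{e}}$) and in Comtet--Fiolet \cite{comtetfiolet}, so the lemma is essentially a restatement of their result in the present notation, and one could reasonably prove it by citing those sources and reconciling conventions rather than carrying out the induction in full.
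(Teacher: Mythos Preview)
Your proposal is correct and, in fact, more detailed than what the paper itself provides. The paper does not give a proof of this lemma at all: it simply introduces the statement as ``the following generalization of implicit differentiation (see \cite{wilde}, \cite{comtet1}, \cite{comtet2}, \cite{comtetfiolet})'' and moves on. Your concluding suggestion---that the lemma is a restatement of results already in Wilde and Comtet--Fiolet, and that one can prove it by citing those sources and reconciling notation---is exactly the paper's approach. The inductive Fa\`a di Bruno sketch you outline first is a reasonable alternative route, but the paper does not pursue it.
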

For example, when $n=1$, this gives the familiar fact that $dy/dx = -P_{x}/P_{y}$.

Recall the definition of $\Omega_{P,r,a}(x,y)$ from \thmref{thm:main}.
\begin{lem}\label{lem:Psiproperties}
Let $\rho(x)$ be a root function for $P(x,y)$ on some open subset of $\C$.  For integers $2\le a\le r$,
\[
\Omega_{P,r,a}\bigl(x, \rho(x)\bigr) = \phi_{r-1, r-a+1, \rho}(x)/\rho(x)^{r-1}.
\]
\end{lem}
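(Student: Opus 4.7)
The plan is to trace the two-stage definition of $\Omega_{P,r,a}$: first unpack it through the substitution $w_{i,j}=x^{i}y^{j-1}P^{(i,j)}/P_{y}$, and then match the result against $\phi_{r-1,r-a+1,\rho}(x)/\rho(x)^{r-1}$. Since $\Psi_{r,a}=\Phi_{r-1,r-a+1}(1,Q_{1},Q_{2},\ldots)$ and $\phi_{r-1,r-a+1,\rho}(x)=\Phi_{r-1,r-a+1}\bigl(\rho,x\rho',x^{2}\rho'',\ldots,x^{a-1}\rho^{(a-1)}\bigr)$, it suffices to prove two things: (a) under the substitution and with $y=\rho(x)$, each $Q_{n}$ becomes $x^{n}\rho^{(n)}(x)/\rho(x)$; (b) $\Phi_{r-1,r-a+1}$ is homogeneous of degree $r-1$, so factoring $\rho(x)$ out of each of its arguments converts $(1,x\rho'/\rho,\ldots)$ to $\rho^{-(r-1)}(\rho,x\rho',\ldots)$.

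For (a), the key is weight-tracking in a monomial $w_{\mathbf{e}}=\prod_{i,j} w_{i,j}^{e_{i,j}}$ of $Q_{n}$. Under the substitution, $w_{\mathbf{e}}$ becomes
\[
\prod_{i,j}\Bigl(x^{i}y^{j-1}P^{(i,j)}/P_{y}\Bigr)^{e_{i,j}}
= x^{\sum i e_{i,j}}\,y^{\sum (j-1)e_{i,j}}\,P_{y}^{-\sum e_{i,j}}\,\prod_{i,j}(P^{(i,j)})^{e_{i,j}}.
\]
The three defining constraints on $\mathbf{e}\in E_{n}$, namely $\sum_{i,j} i\,e_{i,j}=n$, $\sum_{i,j} j\,e_{i,j}=2n-2$, and $\sum_{i,j} e_{i,j}=2n-1$, force these exponents to be $x^{n}$, $y^{-1}$, and $P_{y}^{-(2n-1)}$ independently of $\mathbf{e}$. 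Thus these three factors pull out of the sum $\sum_{\mathbf{e}} b_{n,\mathbf{e}} w_{\mathbf{e}}$, leaving $x^{n}\,y^{-1}\,P_{y}^{-(2n-1)}\cdot\bigl[Q_{n}\text{ evaluated at }w_{i,j}=P^{(i,j)}\bigr]$. By \lemref{lem:implicitderivs}, the bracketed quantity equals $\rho^{(n)}(x)\,P_{y}^{2n-1}$ when $y=\rho(x)$; the $P_{y}$ factors cancel, and $y^{-1}$ becomes $1/\rho(x)$, proving (a).

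For (b), in our range $2\le a\le r$ we have $1\le r-a+1\le r-1$, so by the homogeneity property of $\Phi_{n,k}$ recalled in section~\ref{sec:notation}, $\Phi_{r-1,r-a+1}$ is homogeneous of degree $r-1$. Applying this with scaling factor $\rho(x)$,
\[
\Phi_{r-1,r-a+1}\bigl(1,\,x\rho'/\rho,\ldots,\,x^{a-1}\rho^{(a-1)}/\rho\bigr)
= \rho(x)^{-(r-1)}\,\Phi_{r-1,r-a+1}\bigl(\rho,\,x\rho',\ldots,\,x^{a-1}\rho^{(a-1)}\bigr),
\]
which by definition is $\phi_{r-1,r-a+1,\rho}(x)/\rho(x)^{r-1}$. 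Combining (a) with (b) applied to $\Psi_{r,a}$ yields the stated formula. The only real obstacle is the combinatorial bookkeeping in (a): one must notice that the three weight conditions cutting out $E_{n}$ align exactly with the three $\mathbf{e}$-dependent exponents appearing in the substitution, so that the residual sum is precisely the one handled by \lemref{lem:implicitderivs}; this alignment is what makes the normalization $w_{i,j}=x^{i}y^{j-1}P^{(i,j)}/P_{y}$ (with its otherwise mysterious $y^{j-1}$) the natural choice.
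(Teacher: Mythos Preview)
Your proof is correct and follows essentially the same route as the paper's: compute $Q_n$ under the substitution by exploiting the three weight constraints on $E_n$ to factor out $x^n y^{-1} P_y^{-(2n-1)}$, invoke \lemref{lem:implicitderivs}, and then use the degree-$(r-1)$ homogeneity of $\Phi_{r-1,r-a+1}$ to pull out $\rho(x)^{-(r-1)}$. The paper organizes step~(a) slightly differently by introducing an auxiliary family of indeterminates $\mathbf{v}=\{v_{i,j}\}$ and first proving the identity $Q_n(\mathbf{w})=x^n Q_n(\mathbf{v})/(y\,v_{0,1}^{2n-1})$ at the level of polynomials before specializing $v_{i,j}$ to the partials of $P$, but the content is the same.
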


\begin{proof}
Let $\mathbf{v} = \{v_{i,j}\}$ be a doubly-indexed sequence of indeterminates, like the indeterminates $\mathbf{w} = \{w_{i,j}\}$ used by $Q_{n}$ and $\Psi_{r,a}$.  By the restrictions on the monomials in the definition of $Q_{n}$, if we substitute $w_{i,j} = x^{i}y^{j-1} v_{i,j}/v_{0,1}$ for all $i,j$, then $Q_{n}(\mathbf{w})= x^{n} Q_{n}(\mathbf{v})/(y \,v_{0,1}^{2n-1})$.  Therefore, if we substitute $v_{i,j} = \frac{\partial^{i+j}}{\partial x^{i} \partial y^{j}} P$ 
and $y=\rho(x)$, \lemref{lem:implicitderivs} implies that $y\, Q_{n}(\mathbf{w}) = x^{n} \rho^{(n)}(x)$.

By definition, $\Omega_{P,r,a}(x,y)
 = \Psi_{r,a}(\mathbf{w}) 
 = \Phi_{r-1,r-a+1}\bigl(1, Q_{1}(\mathbf{w}), Q_{2}(\mathbf{w}), \dotsc\bigr)$.  Since $\Phi_{r-1,r-a+1}$ is homogeneous of degree $r-1$,
\[
\begin{split}
\Omega_{P,r,a}(x,y) =\;& \Phi_{r-1,r-a+1}\bigl(y, y\,Q_{1}(\mathbf{w}), y\,Q_{2}(\mathbf{w}), \dotsc\bigr)/y^{r-1} \\
=\; & \Phi_{r-1,r-a+1}\bigl(\rho(x), x \rho'(x), x^{2} \rho''(x),\dotsc\bigr)/y^{r-1} \\
=\; & \phi_{r-1, r-a+1,\rho}(x)/\rho(x)^{r-1}. \qedhere
\end{split}
\]
\end{proof}


\section{Proof of \propref{prop:uniqueness}}\label{sec:proofofuniqueness}

\begin{lem}
Let $\omega\colon\R\to \R$ be a quasiperiodic function, and let $\tilde{\omega} = \omega\bigr|_{\N}$, the restriction of $\omega$ to $\N$.  If $\tilde{\omega}(n)$ approaches some (finite) limit as $n\to\infty$, then $\tilde{\omega}$ must be constant.
\end{lem}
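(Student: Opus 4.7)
The plan is to realize the restriction $\tilde{\omega}$ as the pullback of a continuous function on a compact torus along the forward orbit of one fixed element, and then conclude with a density/continuity argument.

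First I would decompose $\omega=\sum_{j=1}^{k}f_{j}$ with each $f_{j}$ continuous of period $T_{j}>0$, and bundle the summands into a single continuous function $G\colon(\R/\Z)^{k}\to\R$ defined by $G(x_{1},\dotsc,x_{k})\deq\sum_{j}f_{j}(T_{j}x_{j})$. With $\alpha\deq(1/T_{1},\dotsc,1/T_{k})\in(\R/\Z)^{k}$, one then has $\omega(n)=G(n\alpha)$ for every $n\in\N$, so the lemma reduces to a statement about a continuous function on a compact abelian group evaluated along an integer orbit.

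Let $H\subseteq(\R/\Z)^{k}$ be the closure of the cyclic subgroup $\langle\alpha\rangle$; this is a compact subgroup containing $\alpha$. The key geometric input I need is the density statement that for every $M\ge 0$, the forward orbit $\{n\alpha:n\in\N,\ n\ge M\}$ is dense in $H$. This is the step I expect to require the most care, since the usual Kronecker-style statement only gives density of the two-sided orbit $\{n\alpha:n\in\Z\}$ in $H$. To upgrade to arbitrarily large positive indices, I would run a pigeonhole argument on $(n\alpha)_{n\ge 1}$ to produce arbitrarily large positive integers $N$ with $N\alpha$ arbitrarily close to $0\in(\R/\Z)^{k}$; then, given any $h\in H$ and any initial index $n_{0}$ with $n_{0}\alpha$ close to $h$, the shifted index $n_{0}+N$ gives an approximation to $h$ by $n\alpha$ with arbitrarily large positive $n$.

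With this density statement in hand, assume $\omega(n)\to L$ and fix any $h\in H$. Choose $n_{i}\in\N$ with $n_{i}\to\infty$ and $n_{i}\alpha\to h$; continuity of $G$ then forces $G(h)=\lim_{i}G(n_{i}\alpha)=\lim_{i}\omega(n_{i})=L$. Hence $G\equiv L$ on $H$, and since $n\alpha\in H$ for every $n\in\N$, I conclude $\omega(n)=L$ for all $n\in\N$, so $\tilde{\omega}$ is the constant sequence $L$.
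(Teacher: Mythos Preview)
Your proof is correct. The paper's argument shares the same core mechanism: one finds arbitrarily large positive integers $m_j$ that are simultaneously close to an integer multiple of each period $T_i$ (equivalently, in your language, $m_j\alpha$ is close to $0$ in the torus), and then uses continuity and periodicity to conclude $\omega(a)=\lim_j\omega(a+m_j)=\lim_{n\to\infty}\omega(n)$ for every fixed $a\in\N$. The difference is packaging. The paper obtains the near-periods $m_j$ by a one-line citation to a simultaneous Diophantine approximation theorem from Cassels and then finishes directly, never introducing the torus, the function $G$, or the closed subgroup $H$. Your route is more self-contained (you produce the needed recurrence by a pigeonhole argument rather than citing it) and slightly more structural, in that you actually prove the stronger statement $G\equiv L$ on all of $H$; the paper's route is shorter. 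Neither approach yields anything the other does not for the purposes of the uniqueness proposition that follows.
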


\begin{proof}
Assume that $\tilde{\omega}$ is nonconstant.  Choose continuous, periodic functions $\tau_1(t),\dotsc, \tau_k(t)$ such that $\omega(t)=\sum_{i=1}^k \tau_i(t)$.  Without loss of generality, each of the summands $\tau_{i}(t)$ is nonconstant.  For each $i$, let $\alpha_i$ be the period of $\tau_i$.

For real numbers $x$, let $\norm{x}$ denote the distance from $x$ to the nearest integer.  By a result in simultaneous Diophantine approximation \cite[Chapter~I, Theorem~VI]{cassels}, there is an infinite sequence of distinct positive integers $m_j$ such that $\max_{1\le i\le k} \norm{m_j/\alpha_i}\to 0$ as $j\to\infty$.  Therefore, $m_j\to 0$ simultaneously in each of the metric spaces $\R/\alpha_i\Z$ (the natural domain of $\tau_i$).  We conclude that for every positive integer $a$,
\[
    \omega(a) =\sum_{i=1}^k \tau_i(a)= \sum_{i=1}^k \lim_{j\to\infty} \tau_i(a+m_j) = \lim_{j\to\infty} \omega(a+m_j) = \lim_{n\to\infty}\omega(n),
\]
therefore $\tilde{\omega}$ is constant.
\end{proof}

\begin{proof}[Proof of \propref{prop:uniqueness}]
It suffices to prove uniqueness for expansions of $0$.  That is, if a sequence of quasiperiodic functions $\{c_j(t)\}$ satisfies $0 \quasi \sum_{j=0}^{\infty} c_j(n)/n^j$, we wish to show that $c_{j}(n)=0$ for all integers $n\ge 1$ and $j\ge 0$.

We will induct.  Let $k$ be a nonnegative integer, and suppose that $c_{j}(n)\equiv 0$ for $0\le j < k$ and $n\in\N$.  (Conveniently, this also applies for our base case, $k=0$, by vacuity.)  Then
\[
\frac{c_{k}(n)}{n^{k}} = \sum_{j=0}^{k} \frac{c_{j}(n)}{n^{j}} = \Osub{k}\biggl(\frac{1}{n^{k+1}}\biggr),
\]
hence $c_{k}(n)\to 0$ as $n\to\infty$.  Therefore, by the lemma above, $c_{k}(n)=0$ for all positive integers $n$.
\end{proof}


\section{Proof of \thmref{thm:main}}\label{sec:proofofmain}

Since $m(P_1 P_2)=m(P_1)+m(P_2)$, we have $\Delta_{n}(P_{1}P_{2}) = \Delta_{n}(P_{1}) + \Delta_{n}(P_{2})$ for all $n$.  We may therefore assume without loss of generality that $P$ is absolutely irreducible, at least for the purpose of showing existence of an a.p.p.s.\ expansion.

A change of variables shows that $m\bigl(P(x^{M},y^{N})\bigr) = m\bigl(P(x,y)\bigr)$ for all nonzero integers $M, N$.  It follows that if $\deg_{x}(P)$ or $\deg_{y}(P)=0$, then $\mathcal{E} = \emptyset$ and $\Delta_{n}(P)=0$ for all $n$, hence \thmref{thm:main} is trivially true.  We will therefore assume that $\deg_x(P)$ and $\deg_y(P)$ are both $>0$.

Also, if there exists an $k\in\N$ for which $P(x,x^{k})$ is identically zero, then $Z(P)\cap Z(y-x^{k})$ would be infinite, hence $P = c(y-x^{k})$ for some constant $c$, and $\mathcal{E} = \emptyset$.  But $m(y-x^{k}) = 0$, as follows from \eqref{eqn:PxyMM}, and also $m(x^{n}-x^{k})=0$ for all $n\in\N$ by \eqref{eq:onevar}.  Hence, we would have $\Delta_{n}(P) = 0$ for all $n$, and the theorem is again trivially true.  We may therefore assume that, for every positive integer $k$, $P(x,x^{k})$ is not identically zero.

Since $\mathcal{R}(x)$ has no roots on $\UC$, we may choose real numbers  $0<r_{1} < 1 <r_{2}$ such that the annulus $r_{1} < \abs{x} < r_{2}$ contains no roots of $\mathcal{R}(x)$.  Let $\mathcal{E}_{x}$ be the set of all $x$ coordinates of the points in $\mathcal{E}$, and fix an arbitrary point $\slicepoint\in \UC\setminus \mathcal{E}_{x}$.  Define
\[
	W \deq \{x\in \C \,:\, r_{1} < \abs{x} < r_{2} \text{ and } \arg(x) \ne \arg(\slicepoint)\},
\]
which is the annulus mentioned above sliced along the ray from $0$ through $\slicepoint$.  The set $W$ meets the hypotheses of \lemref{lem:algebraic}.
Let
\[
\rtfnset \deq \{\rho_{1}(x),\dotsc, \rho_{d}(x)\}
\]
be the set of all root functions of $P$ on $W$.  We partition $\rtfnset$ into $\rtfnsettor$, the set of toric root functions on $W$ and $\rtfnsetnon = \rtfnset\setminus\rtfnsettor$, the non-toric root functions.

(The branch cut in the definition of $W$ ensures that the functions in $\rtfnset$ are single-valued, but the break it creates in their domains is unnatural.  Each $\rho(x)\in \rtfnset$ may be analytically continued past that ray, becoming a (usually different) element of $\rtfnset$.)

We can now define the function $s:\mathcal{E}\to\{-1,0,1\}$ mentioned in \thmref{thm:main}.  For each point $(\alpha,\beta)\in\mathcal{E}$, there exists a unique, non-toric root function $\rho(x)\in \rtfnsetnon$ such that $\beta = \rho(\alpha)$.  We define $s(\alpha,\beta)$ to be $+1$ (respectively, $-1$) if $\abs{\rho(x)}-1$ changes from negative to positive (respectively, positive to negative) as $x$ moves counterclockwise past $\alpha$ in the unit circle.  If $\abs{\rho(x)}-1$ does not change sign there, let $s(\alpha,\beta)=0$.

In practice, we will usually not be able to obtain explicit formulas for the root functions.  In such cases, the signs $s(\alpha,\beta)$ can be rigorously determined by an algebraic method using only on the coordinates $(\alpha,\beta)$ and the coefficients of $P$.  This method is described in section~\ref{sec:signdet}.

Subdivide $\UC$ at the points in $\mathcal{E}_{x}\cup\{\slicepoint\}$, obtaining a finite set of open, proper arcs $\Gamma = \{\gamma_{1},\dotsc,\gamma_{M}\}$.  For each arc $\gamma\in \Gamma$ and each non-toric root function $\rho(x)\in \rtfnsetnon$, $\abs{\rho(x)}$ is either always $>1$ or always $<1$ on $\gamma$.  Partition $\Gamma\times \rtfnsetnon$ into the subsets
\[
\begin{split}
\mathcal{A}_{-}	&\deq \bigl\{(\gamma,\rho)\in\Gamma \times \rtfnsetnon :
	 \abs{\rho(x)}>1 \text{ on } \gamma\bigr\}, \\
\mathcal{A}_{+}	&\deq \bigl\{(\gamma,\rho)\in\Gamma \times \rtfnsetnon :
	 \abs{\rho(x)}< 1 \text{ on } \gamma\bigr\}. \\
\end{split}
\]
(The choice of which is $+$ or $-$ is slightly more convenient this way.)

We now extend the domain of $s\colon \mathcal{E}\to \{-1,0,1\}$ by setting $s(\alpha,\beta)=0$ for all $(\alpha,\beta)\notin \mathcal{E}$.  Recall the definition of $\eval{f(x)}{\gamma}$ from section~\ref{sec:notation}.
For a fixed choice of $\pm$, and for any operator $H$ that sends holomorphic functions to continuous functions, careful bookkeeping of signs shows that
\begin{equation}\label{eq:evalsum}
  \sum_{(\gamma,\rho)\in\mathcal{A}_{\pm}} \eval{H\rho(x)}{\gamma} = \pm \sum_{\rho\in \rtfnsetnon}\sum_{\alpha\in \mathcal{E}_{x}} s\bigl(\alpha,\rho(\alpha)\bigr) H \rho(\alpha).
\end{equation}
(For certain root functions $\rho(x)\in\rtfnsetnon$, a copy of $\pm \lim_{x\to \slicepoint} H \rho(x)$ (as $x$ approaches $\slicepoint$ along $\UC$, from one direction or the other) may appear on the left side, but this will be cancelled by the contributions from other root functions.)

With this setup, we are now ready to examine $\Delta_{n}(P)$.

\begin{proof}[Proof of \thmref{thm:main}]

The strategy of the proof is inspired by that used by Boyd in his proof of \propref{prop:boydasymp}.

We first rewrite $m\bigl(P(x,y)\bigr)$, using a standard trick.
\begin{equation*}
m\bigl(P(x,y)\bigr) = \frac{1}{2\pi i} \int_{\abs{x}=1}
	\biggl[ \frac{1}{2\pi i} \int_{\abs{y}=1} \log\bigl|P(x,y)\bigr| \frac{dy}{y} \biggr] \frac{dx}{x}
\end{equation*}
The expression inside the brackets is actually a one-variable Mahler measure with respect to $y$, with $x$ viewed there as constant.  Therefore, 
\eqref{eq:onevar} implies that
\begin{align}\label{eqn:PxyMM}
\nonumber
m\bigl(P(x,y)\bigr) &= \frac{1}{2\pi i} \int_{\UC}
		\biggl(\log\bigl|a_{d}(x)\bigr|
		+ \sum_{\rho\in \rtfnset} \logp\bigl|\rho(x)\bigr|\biggr) \frac{dx}{x} \\[2mm]
  &= m(a_{d}) + \frac{1}{2\pi i} \sum_{(\gamma,\rho)\in\mathcal{A}_{-}} \int_{\gamma}
  		\log \bigl|\rho(x)\bigr| \frac{dx}{x}.
\end{align}

We now rewrite $m\bigl(P(x,x^{n})\bigr)$.  Recall that for every positive integer $n$, $P(x,x^{n})$ is not identically $0$.
\begin{equation*}
\begin{split}
m\bigl(P(x,x^{n})\bigr) &= \frac{1}{2\pi i} \int_{\UC} \log\bigl|P(x,x^{n})\bigr| \frac{dx}{x} \\[2mm]
  &= \frac{1}{2\pi i} \int_{\UC} \biggl(\log\bigl|a_{d}(x)\bigr|
		+ \sum_{\rho\in \rtfnset} \log\bigl|x^{n}-\rho(x)\bigr|\biggr) \frac{dx}{x} \\[2mm]
  &= m(a_{d}) + \frac{1}{2\pi i} \int_{\UC}
  		\sum_{\rho\in \rtfnset} \log\bigl|x^{n}-\rho(x)\bigr|\frac{dx}{x}.  \\[2mm]
\end{split}
\end{equation*}
Since $P(x,x^{n})\not\equiv 0$, there are, for each $\rho(x)\in \rtfnset$, only finitely many $x\in W$ such that $x^{n} = \rho(x)$.
It follows that $\log\bigl|x^{n}-\rho(x)\bigr|$ is integrable on $\UC$.  Therefore, we may interchange the sum and integral above, obtaining
\begin{equation}\label{eqn:PxxnMM}
m\bigl(P(x,x^{n})\bigr) = m(a_{d}) + \frac{1}{2\pi i} \sum_{\rho\in\rtfnset}
	\int_{\UC} \log\bigl|x^{n}-\rho(x)\bigr|\frac{dx}{x}.
\end{equation}

We now separately consider those terms in \eqref{eqn:PxxnMM} coming from toric root functions.  Given one such $\rho(x)\in\rtfnsettor$ and some $t'\in\R$ such that $\exp(i t')\ne \slicepoint$, there exists a holomorphic function $f(z)$, defined on a neighborhood of $t'$ in $\C$, such that $f(t)\in\R$ for $t\in\R$ and $\rho\bigl(\exp(i z)\bigr) = \exp\bigl(i f(z)\bigr)$.  We analytically continue $f(z)$ to a strip $-\delta<\Im(z)<\delta$ for some $\delta>0$.  If we let $z$ move to the right in steps of $2\pi$, $\exp\bigl(i f(z)\bigr)$ will cycle through values of $\tilde{\rho}\bigl(\exp(i z)\bigr)$ for various $\tilde{\rho}(x)$ in some subset of $\rtfnsettor$.  Letting $L$ equal the size of this orbit, $\exp\bigl(i f(z+2\pi L)\bigr) \equiv \exp\bigl(i f(z)\bigr)$.  Therefore, there exists an integer $M$ such that $f(z+2\pi L) = f(z) + 2\pi M$ for all $z$ in the strip.  Also, $f(t)\not\equiv nt$ for every integer $n$, as otherwise $\rho(x)\equiv x^{n}$, hence $P(x,x^{n})\equiv 0$.

Each function $f(z)$ described above encapsulates some subset of $\rtfnsettor$.  Therefore, the terms in \eqref{eqn:PxxnMM} coming from the toric root functions can be written as the sum of finitely many expressions of the form
\[
\frac{1}{2\pi} \int_{0}^{2\pi L} \log\abs{1-\exp\bigl(i (f(t)-nt)\bigr)} dt,
\]
for some $f$ as described above.  Invoking \lemref{lem:rootfnontorus}, this integral is $\Osub{k}(n^{-k})$ for all $k\ge 0$.
Therefore, for all $k\ge 0$,
\[
m\bigl(P(x,x^{n})\bigr) = m(a_{d}) + \frac{1}{2\pi i} \sum_{\rho\in\rtfnsetnon}
	\int_{\UC} \log\bigl|x^{n}-\rho(x)\bigr|\frac{dx}{x} + \Osub{k}(n^{-k}).
\]
Split $\UC$ into the arcs $\gamma\in\Gamma$, and rewrite $\log\bigl|x^{n}-\rho(x)\bigr|$ as $\log\bigl|1- \rho(x)/x^{n}\bigr|$ if $(\gamma,\rho)\in \mathcal{A}_{+}$, and as $\log\bigl|\rho(x)\bigr|+ \log\bigl|1-x^{n}/\rho(x)\bigr|$ if $(\gamma,\rho)\in \mathcal{A}_{-}$.  Combining the equation above with \eqref{eqn:PxyMM} and using the shorthand $\sum_{\pm} c_{\pm}$ for $c_{+} + c_{-}$,
\begin{align}\label{eqn:Dnptemp}
  \nonumber
  \Delta_{n}(P) &= \frac{1}{2\pi i} \sum_{\pm} \sum_{(\gamma,\rho)\in\mathcal{A}_{\pm}} \int_{\gamma}
		\log\Bigl|1- \Bigl(\frac{\rho(x)}{x^{n}}\Bigr)^{\pm 1}\Bigr| \frac{dx}{x}
		+ \Osub{k}(n^{-k}) \\[2mm] \nonumber
	&= \frac{1}{2\pi} \sum_{\pm} \sum_{(\gamma,\rho)\in\mathcal{A}_{\pm}} \int_{\gamma} 
		-\Re\biggl(\,\sum_{m=1}^{\infty}
		 \frac{1}{m}\Bigl(\frac{\rho(x)}{x^{n}}\Bigr)^{\pm m} \biggr) d\arg(x) + \Osub{k}(n^{-k})
		  \\[2mm]
	&= -\frac{1}{2\pi}  \Im\biggl(\sum_{\pm} \sum_{(\gamma,\rho)\in\mathcal{A}_{\pm}} \int_{\gamma} \sum_{m=1}^{\infty}
		 \frac{1}{m}\Bigl(\frac{\rho(x)}{x^{n}}\Bigr)^{\pm m} \frac{dx}{x} \biggr) + \Osub{k}(n^{-k}).
\end{align}

By Beppo Levi's lemma \cite[Theorem~10.26]{apostol1}, together with the $k=0$ case of \lemref{lem:integralbounds}, we may interchange the integral and the sum over $m$ in \eqref{eqn:Dnptemp}.
Therefore, if we define
\[
  \ipm \deq \sum_{(\gamma,\rho)\in\mathcal{A}_{\pm}} \int_{\gamma} \Bigl(\frac{\rho(x)}{x^{n}}\Bigr)^{\pm m} \frac{dx}{x},
\]
then for all $k\ge 0$,
\begin{equation}\label{eqn:Dnpnice}
  \Delta_{n}(P) = -\frac{1}{2\pi} \sum_{m=1}^{\infty} \frac{1}{m}
    \sum_{\pm} \Im\bigl(\ipm\bigr) + \Osub{k}(n^{-k}).
\end{equation}

For integers $n,m, r\ge 1$ and $k\ge 0$, let
\[
\Spm \deq \pm \!\!\sum_{\rho\in \rtfnsetnon} \sum_{\alpha\in \mathcal{E}_{x}} s\bigl(\alpha,\rho(\alpha)\bigr)\, \alpha^{\mp nm} \h{\pm m}{r-1} \rho(\alpha)
\]
and
\[
\jpmk \deq \!\!\!\sum_{(\gamma,\rho)\in \mathcal{A}_{\pm}} \int_{\gamma} x^{\mp nm} \h{\pm m}{k} \rho(x)  \,\frac{dx}{x}.
\]
\lemref{lem:parts}, together with \eqref{eq:evalsum}, then implies that
\[
  \ipm = -\sum_{r=1}^{k} \frac{1}{(\pm nm)^{r}} \Spm + \frac{1}{(\pm nm)^{k}} \jpmk.
\]
Applying \lemref{lem:integralbounds} to $\jpmk$ (with $\h{\pm m}{k} \rho(x)$ rewritten as $\h{m}{k} \bigl[\rho(x)^{\pm 1}\bigr]$), there exists a $\delta>0$ such that for $n, m\ge 1$ and $k\ge 0$, $\jpmk = \Ok(m^{k-\delta})$.  Hence
\[
    \ipm =
    -\sum_{r=1}^{k} \frac{\Spm}{(\pm nm)^{r}}
    + \Ok\bigl(n^{-k} m^{-\delta}\bigr).
\]

By \cororef{cor:dibrunoBigO}, $S^{\pm}_{n,m,k} = \Ok(m^{k-1})$, hence the $r=k$ term in the sum above may be absorbed into the error term.  It follows that
\[
\sum_{m=1}^\infty \frac{1}{m}\ipm 
	= -\sum_{m=1}^\infty \frac{1}{m}\sum_{r=1}^{k-1} \frac{\Spm}{(\pm nm)^r}
	\,+\, \Ok(n^{-k}).
\]
(Since $\ipm = \jpmzero = \Osub{}(m^{-\delta})$, the sums above converge.)

Therefore, by \eqref{eqn:Dnpnice}, for all $n\ge 1$ and $k\ge 0$,
\begin{equation}\label{eq:almost}
  \Delta_{n}(P) = \sum_{r=1}^{k-1} \frac{c_r(n)}{n^r}  + \Ok(n^{-k}),
\end{equation}
where
\[
  c_r(n) \deq \frac{1}{2\pi}\sum_{m=1}^\infty \frac{1}{m^{r+1}} \sum_\pm (\pm 1)^r\Im\bigl(\Spm\bigr).
\]

We now have a series expansion for $\Delta_{n}(P)$.  The coefficients $c_r(n)$ are independent of $k$, as one would hope.  To finish the proof of \thmref{thm:main}, we still need to rewrite the coefficients in the form given in \eqref{eq:maincoeffformula} and show that they are quasiperiodic.

But first, we will prove that $c_{1}(n) = 0$.
For all $n, m\ge 1$,
\[
\Spmone = \pm\!\!\sum_{\rho\in \rtfnsetnon}\sum_{\alpha\in \mathcal{E}_{x}} s\bigl(\alpha,\rho(\alpha)\bigr)\, \Bigl(\frac{\rho(\alpha)}{\alpha^{n}}\Bigr)^{\pm m}
	= \pm \!\!\!\sum_{(\alpha,\beta)\in\mathcal{E}} s(\alpha,\beta) (\beta/\alpha^{n})^{\pm m}.
\]
For $(\alpha,\beta)\in \mathcal{E}$, $(\beta/\alpha^{n})^{-m}=\overline{(\beta/\alpha^{n})^m}$.  Therefore,
\[
\sum_\pm \pm \Spmone =
	2 \!\!\sum_{(\alpha,\beta)\in \mathcal{E}} s(\alpha,\beta) \Re\bigl((\beta/\alpha^{n})^{m}\bigr).
\]
Since this expression is real, $\sum_\pm \pm \Im\bigl(\Spm\bigr) = \Im\bigl(\sum_\pm \pm \Spm\bigr) =0.$  It follows that $c_{1}(n)=0$.

Consequently, we can restrict to $r\ge 2$ in \eqref{eq:almost}.

Recall the functions $\phi_{k,j,f}(x)$ described in \lemref{lem:dibrunocor}.  By that lemma,
\[
\Spm = \pm\sum_{\rho\in \rtfnsetnon} \sum_{\alpha\in \mathcal{E}_{x}} s\bigl(\alpha,\rho(\alpha)\bigr)
		\Bigl(\frac{\rho(\alpha)}{\alpha^{n}}\Bigr)^{\!\pm m} \,
     		\sum_{j=0}^{r-1} \frac{\phi_{r-1,j,\rho}(\alpha)}{\rho(\alpha)^{r-1}} (\pm m)^j.
\]
Since $r\ge 2$, $\phi_{r-1,0,\rho}=0$.  Therefore, we may restrict to $j\ge 1$.
By \lemref{lem:Psiproperties},
\[
\Spm = \pm\!\!\!\sum_{(\alpha,\beta)\in \mathcal{E}} s(\alpha,\beta) (\beta/\alpha^{n})^{\pm m}
	\sum_{j=1}^{r-1} \Omega_{P,r,r-j+1}(\alpha,\beta) (\pm m)^{j}.
\]
Therefore,
\[
\begin{split}
  c_r(n) &=  \frac{1}{2\pi} \sum_{m=1}^\infty \frac{1}{m^{r+1}} \sum_\pm (\pm 1)^{r} \Im\bigl(\Spm\bigr) \\
  &= \frac{1}{2\pi} \sum_{(\alpha,\beta)\in \mathcal{E}} s(\alpha,\beta) \sum_{j=1}^{r-1}
  		\Im\biggl(\Omega_{P,r,r-j+1}(\alpha,\beta) \sum_\pm
            \sum_{m=1}^\infty \frac{(\beta/\alpha^{n})^{\pm m}}{(\pm m)^{r-j+1}} \biggr).
\end{split}
\]
Letting $a=r-j+1$,
\[
  c_r(n) = \frac{1}{2\pi} \!\!\sum_{(\alpha,\beta)\in \mathcal{E}} \!\! s(\alpha,\beta) \sum_{a=2}^{r}
  		\Im\Bigl(\Omega_{P,r,a}(\alpha,\beta) \sum_\pm (\pm 1)^{a}
            \Li{a}\bigl((\beta/\alpha^{n})^{\pm 1}\bigr)\Bigr).
\]
Recall from \thmref{thm:main} that we define $\RR{a}(z)$ to be $\Re(z)$ for $a$ even and $\Im(z)$ for $a$ odd.
Like before, $(\alpha,\beta)\in\mathcal{E}$ implies
$\Li{a}\bigl((\beta/\alpha^{n})^{-1}\bigr) = \Li{a}\bigl(\overline{\beta/\alpha^{n}}\bigr) = \overline{\Li{a}(\beta/\alpha^{n})}$.  
Therefore, for each $(\alpha,\beta)\in\mathcal{E}$,
\[
\begin{split}
\Im\biggl(\Omega_{P,r,a}&(\alpha,\beta) \sum_\pm (\pm 1)^{a}
		\Li{a}\bigl((\beta/\alpha^{n})^{\pm 1}\bigr) \biggr) \\
    =&\, \Im\biggl(\Omega_{P,r,a}(\alpha,\beta)
    		\,2\RR{a}\bigl(\Li{a}(\beta/\alpha^{n}) \bigr)
		\begin{cases}
		1 & \text{$a$ even} \\[-1.5mm]
		i & \text{$a$ odd}
		\end{cases}
		\biggr) \\[1mm]
    =&\,  2\RR{a+1}\bigl(\Omega_{P,r,a}(\alpha,\beta)\bigr) \RR{a}\bigl(\Li{a}(\beta/\alpha^{n})\bigr),
\end{split}
\]
which proves \eqref{eq:maincoeffformula}.

For any given point $(\alpha,\beta)\in \mathcal{E}$, let $\theta$ be some value for $\arg(\alpha)$, so we can write $\beta/\alpha^{n} = \beta \exp(-i\theta n)$.  Replacing $n$ with a real parameter $t$, this gives a  periodic function of $t$.  Since $a\ge 2$, $\Li{a}(z)$ is continuous on $\UC$.
Hence $\RR{a}\bigl(\Li{a}(\beta \exp(-i \theta t))\bigr)$ is a continuous, periodic function of $t$.  (And if $\alpha$ is a $k$-th root of unity, the function has period dividing $k$.)
It follows that $c_{r}(n)$ is the restriction to $\N$ of a quasiperiodic function.  This concludes the proof of \thmref{thm:main}.
\end{proof}


\section{Proof of \cororef{cor:main}}\label{sec:proofofcor}

We recall here the formula given in \cororef{cor:main}.  
Let $\xi = \exp(2\pi i/3)$.  Then $\Delta_{n}(1+x+y) \quasi \sum_{r=2}^{\infty} c_{r}(n)/n^{r}$, for
\begin{equation}\label{eq:cororeminder}
c_r(n) =  \frac{2}{\pi}
	 \sum_{\substack{a,b \\ a+b=r+1}} 
	 	\!\!\!\RR{a}\bigl(\Li{a}(\xi^{n+1})\bigr) \sum_{j=b}^{r-1}
	 (-1)^{b} \stirone{j}{b}\stirtwo{r-1}{j} \RR{a+1}(\xi^{j}).
\end{equation}

\begin{proof}[Proof of \cororef{cor:main}]
For this $P$, we have a unique root function: $\rho(x)= -x-1$.  Any point $(\alpha,\beta)\in Z(P)\cap \UC^{2}$ must satisfy $\abs{\alpha}=\abs{\alpha+1}=1$.  It follows that $\mathcal{E} =\bigl\{(\xi, \xi^{-1}), (\xi^{-1},\xi)\bigr\}$.  The associated signs are $s(\xi^{\mp}, \xi^{\pm}) = \pm 1$.

$P$ satisfies the hypotheses of \thmref{thm:main}.  Since our root function is so simple, it is easier to calculate $\Omega_{P,r,a}(x,y)$ with \lemref{lem:Psiproperties}, bypassing $\Psi_{r,a}$:
\[
\begin{split}
\Omega_{P,r,a}(x,y) &= \phi_{r-1,r-a+1}(x)/y^{r-1} \\
	&=\Phi_{r-1,r-a+1}\bigl(\rho(x), x\rho'(x),x^{2}\rho''(x),\dotsc\bigr)/y^{r-1} \\
	&= \Phi_{r-1,r-a+1}\bigl(-x-1, -x,0,0,\dotsc\bigr)/y^{r-1}.
\end{split}
\]
To calculate this, we need to evaluate the Bell polynomials $B_{j,i}(-x,0,\dotsc,0)$ for various $i,j$.  It follows from the definitions that $B_{j,i}(-x,0,\dotsc,0) = 0$ unless $i=j$, and that $B_{j,j}(-x,0,\dotsc,0) = (-x)^{j}$.  Therefore, letting $b=r-a+1$ for brevity,
\[
\Omega_{P,r,a}(x,y) = \frac{1}{y^{r-1}} \sum_{j} (-1)^{j-b} 
	\stirone{j}{b} \stirtwo{r-1}{j} (-x-1)^{r-j-1}(-x)^{j}.
\]
If we substitute a point $(\alpha,\beta)\in\mathcal{E} = \{(\xi^{\mp}, \xi^{\pm})\}$, then since $\beta= -\alpha-1 = 1/\alpha = \alpha^{2}$ for these points,
\[
\Omega_{P,r,a}(\alpha,\beta) = (-1)^{b} \sum_{j} \stirone{j}{b} \stirtwo{r-1}{j} \alpha^{-j}.
\]
We now apply \eqref{eq:maincoeffformula}.  Instead of summing over $(\alpha,\beta)\in\mathcal{E}$, we substitute $(\alpha,\beta) = (\xi^{-\sigma},\xi^{\sigma})$ and sum over $\sigma=\pm 1$.  (This way, $s(\xi^{-\sigma},\xi^{\sigma}) = \sigma$.)
\[
\begin{split}
  c_r(n)
    &= \frac{1}{\pi}\sum_{\sigma=\pm 1} \sigma \sum_{a=2}^{r} 
    		\RR{a+1}\bigl(\Omega_{P,r,a}(\xi^{-\sigma},\xi^{\sigma})\bigr) \RR{a}\bigl(\Li{a}(\xi^{\sigma(n+1)})\bigr) \\
    &= \frac{1}{\pi}\sum_{\sigma=\pm 1} \,\sigma \!\!\!\sum_{\substack{a+b = r+1,\\ 2\le a \le r}} \!\!
    	 \RR{a}\bigl(\Li{a}(\xi^{\sigma(n+1)})\bigr)
    	(-1)^{b} \sum_{j} \stirone{j}{b} \stirtwo{r-1}{j} \RR{a+1}\bigl(\xi^{\sigma j}\bigr)
\end{split}
\]

The restriction that $2\le a \le r$ may be dropped, since the summands vanish if $a< 1$ or $a> r$.

When $\sigma=-1$, the arguments to both $\RR{a}$ and $\RR{a+1}$ above become the conjugates of the values they take when $\sigma=+1$.  A careful examination of signs then shows that the $\sigma=-1$ term is identical to the $\sigma=+1$ term.  Therefore, we may drop all occurrences of $\sigma$ and double the expression, obtaining \eqref{eq:cororeminder}.
\end{proof}


\section{The coefficients $c_{2}(n)$ and $c_{3}(n)$}\label{sec:earlycoeffs}

The formulas for the coefficients in \thmref{thm:main} and \cororef{cor:main} are so complicated that it is difficult to see what they are actually saying, even in simple cases.  In this section, we will see in more detail what the first two nontrivial coefficients look like, both for general $P$ and for $P=1+x+y$.

For what follows, let $P_{i,j}$ denote $\frac{\partial^{i+j}}{\partial x^{i} \partial y^{j}} P$.
\begin{prop}\label{prop:earlycoeffsgeneral}
Let $P(x,y)$, $\mathcal{E}$, and $s\colon \mathcal{E}\to\{-1,0,1\}$ be as in \thmref{thm:main}.  Then for all $n$,
\[
  c_2(n)
    = \frac{1}{\pi}\sum_{(\alpha,\beta)\in \mathcal{E}} s(\alpha,\beta)
    		\Im(F_{P}) \Re\bigl(\Li{2}(\beta/\alpha^n)\bigr)
\]
and
\[
  c_3(n)
    = \frac{1}{\pi} \!\!\sum_{(\alpha,\beta)\in \mathcal{E}} \!\!s(\alpha,\beta)\Bigl[
    	\Im(F_{P}^{2}) \Re\Bigl(\Li{2}\Bigl(\frac{\beta}{\alpha^{n}}\Bigr)\Bigr)
	+ \Re(G_{P}) \Im\Bigl(\Li{3}\Bigl(\frac{\beta}{\alpha^{n}}\Bigr)\Bigr)
	\Bigr],
\]
where
\[
F_{P}(x,y) = -\frac{x P_{1,0}}{y P_{0,1}}
\]
and
\[
G_{P}(x,y) = \Bigl(-\frac{P_{2,0}}{P_{0,1}} 
 + \frac{2P_{1,0} P_{1,1}}{P_{0,1}^{2}} 
 - \frac{P_{0,2} P_{1,0}^{2}}{P_{0,1}^{3}}\Bigr)\frac{x^{2}}{y}
 + F_{P} - F_{P}^{2}.
\]
\end{prop}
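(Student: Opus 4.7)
The plan is to specialize the general coefficient formula \eqref{eq:maincoeffformula} of \thmref{thm:main} to $r=2$ and $r=3$ and identify the resulting rational functions $\Omega_{P,r,a}$ with the expressions $F_P$ and $G_P$ named in the statement. No additional analytic input is needed; the proof reduces to verifying two explicit algebraic substitutions using the table of $\Psi_{r,a}$ and the recipe $w_{i,j}=x^{i} y^{j-1} P_{i,j}/P_{0,1}$ from \thmref{thm:main}.

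First I would unpack the sign conventions of $\RR{a}$: for $a=2$ one has $\RR{2}=\Re$ and $\RR{3}=\Im$; for $a=3$ one has $\RR{3}=\Im$ and $\RR{4}=\Re$. Next, for $r=2$ the only index is $a=2$, so from Table~\ref{PsiTable} we read $\Psi_{2,2} = -w_{1,0}$. Substituting gives
\[
\Omega_{P,2,2}(x,y) \;=\; -\frac{x\,P_{1,0}}{y\,P_{0,1}} \;=\; F_{P}(x,y),
\]
and plugging this into \eqref{eq:maincoeffformula} yields the claimed formula for $c_{2}(n)$.

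For $r=3$ there are two terms, $a=2$ and $a=3$. From the table, $\Psi_{3,2} = w_{1,0}^{2}$, so $\Omega_{P,3,2}(x,y) = F_{P}(x,y)^{2}$; this contributes the $\Im(F_{P}^{2})\Re(\Li{2})$ summand. For $a=3$, I would take $\Psi_{3,3} = -w_{0,1}^{2} w_{2,0} + 2 w_{0,1} w_{1,0} w_{1,1} - w_{0,2} w_{1,0}^{2} - w_{1,0}^{2} - w_{1,0}$ and substitute the monomials $w_{0,1}=1$, $w_{2,0}=x^{2}P_{2,0}/(yP_{0,1})$, $w_{1,1}=xP_{1,1}/P_{0,1}$, $w_{0,2}=yP_{0,2}/P_{0,1}$, and $w_{1,0}=xP_{1,0}/(yP_{0,1})=F_{P}$. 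Collecting the first three terms gives the parenthesised expression multiplied by $x^{2}/y$, while the last two terms contribute $-F_{P}^{2} + F_{P}$; in other words $\Omega_{P,3,3}(x,y) = G_{P}(x,y)$.

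The main (modest) obstacle is the bookkeeping for $\Psi_{3,3}$: one has to keep the rational-function exponents straight so that every term lands in the form displayed in $G_P$. Once both $\Omega_{P,3,2}=F_P^{2}$ and $\Omega_{P,3,3}=G_P$ are identified, inserting them into \eqref{eq:maincoeffformula} with the correct $\RR{a}$ conventions produces exactly the claimed expression for $c_{3}(n)$, completing the proof.
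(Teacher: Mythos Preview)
Your approach is exactly the one the paper uses: specialise \eqref{eq:maincoeffformula} to $r=2,3$, read off $\Psi_{2,2},\Psi_{3,2},\Psi_{3,3}$ from Table~\ref{PsiTable}, and substitute $w_{i,j}=x^{i}y^{j-1}P_{i,j}/P_{0,1}$. One small slip to fix: you write $w_{1,0}=xP_{1,0}/(yP_{0,1})=F_{P}$, but by definition $F_{P}=-xP_{1,0}/(yP_{0,1})$, so in fact $w_{1,0}=-F_{P}$; your subsequent claim that $-w_{1,0}^{2}-w_{1,0}=-F_{P}^{2}+F_{P}$ is correct precisely because of that minus sign, so the final identification $\Omega_{P,3,3}=G_{P}$ stands.
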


\begin{proof}
The formulas follows directly from \thmref{thm:main}, using the values of $\Psi_{2,2}$, $\Psi_{3,2}$, and $\Psi_{3,3}$ found in Table~\ref{PsiTable}.
\end{proof}

Things get much nicer when one specializes to a simple polynomial, like $1+x+y$, as then all but a few of the partials vanish.

\begin{prop}  Let $\xi = \exp(2\pi i/3)$.  For $P=1+x+y$,

\[
c_2(n) = -\frac{\sqrt{3}}{\pi} \Re\bigl(\Li{2}(\xi^{n+1})\bigr)
\]
and
\[
c_3(n) = \frac{1}{\pi}\Bigl(2\Im\bigl(\Li{3}(\xi^{n+1})\bigr) - \sqrt{3} \Re\bigl(\Li{2}(\xi^{n+1})\bigr)\Bigr).
\]
\end{prop}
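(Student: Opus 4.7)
The plan is to obtain both formulas by specializing one of the closed-form expressions already established. Two candidates present themselves: \cororef{cor:main} (the Stirling-number closed form valid specifically for $P=1+x+y$), or \propref{prop:earlycoeffsgeneral} (the closed forms for $c_2$ and $c_3$ valid for general $P$). I would take the first route, because for the small values $r=2,3$ the double sum in \cororef{cor:main} collapses to a handful of nonzero terms.

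For $r=2$, the pairs $(a,b)$ with $a+b=3$ are $(1,2)$ and $(2,1)$; the first has $b>r-1=1$, giving an empty inner sum, so only $(a,b)=(2,1)$ contributes. Its inner sum reduces to $\stirone{1}{1}\stirtwo{1}{1}\RR{3}(\xi)=\Im(\xi)=\sqrt{3}/2$, and with the factor $(-1)^b=-1$ and prefactor $2/\pi$ this yields exactly $c_2(n)=-\frac{\sqrt{3}}{\pi}\Re\bigl(\Li{2}(\xi^{n+1})\bigr)$.

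For $r=3$, the pairs are $(1,3),(2,2),(3,1)$; the first again yields an empty inner sum. For $(a,b)=(2,2)$ the inner sum at $j=2$ equals $\stirone{2}{2}\stirtwo{2}{2}\Im(\xi^2)=-\sqrt{3}/2$, contributing $-\tfrac{\sqrt{3}}{2}\Re\bigl(\Li{2}(\xi^{n+1})\bigr)$. For $(a,b)=(3,1)$ the inner sum at $j=1,2$ equals $\Re(\xi)+\Re(\xi^2)=-1$, and together with $(-1)^b=-1$ this contributes $\Im\bigl(\Li{3}(\xi^{n+1})\bigr)$. Multiplying the sum by $2/\pi$ gives the asserted formula for $c_3(n)$.

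No real obstacle arises: the derivation is just arithmetic with the root of unity $\xi$ and the tables of $\stirone{\cdot}{\cdot}$ and $\stirtwo{\cdot}{\cdot}$ for small indices. As a sanity check, one can rederive the same formulas via \propref{prop:earlycoeffsgeneral}: since every second-order partial of $P=1+x+y$ vanishes, $F_P=-x/y$ and $G_P=F_P-F_P^2$; evaluating at the two points $(\alpha,\beta)=(\xi^{-\sigma},\xi^{\sigma})$ with $s(\alpha,\beta)=\sigma$ and using $\beta/\alpha^n=\xi^{\sigma(n+1)}$ together with $\Li{k}(\bar z)=\overline{\Li{k}(z)}$, the two $\sigma$-contributions combine to exactly the same expressions. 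The only item requiring care in either approach is tracking signs and real/imaginary parts carefully enough to see that the would-be $\Im(\Li{2})$ and $\Re(\Li{3})$ pieces cancel out.
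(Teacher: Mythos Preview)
Your computation is correct. The primary route you take---specializing \cororef{cor:main} at $r=2,3$ and evaluating the small Stirling numbers and $\RR{a+1}(\xi^{j})$ by hand---differs from the paper's proof, which instead specializes \propref{prop:earlycoeffsgeneral}: it reads off $F_{P}=-\xi^{\sigma}$, hence $\Im(F_{P})=\Im(F_{P}^{2})=-\sigma\sqrt{3}/2$ and $G_{P}=-(\xi^{\sigma}+\xi^{-\sigma})=1$, and then sums over the two points of $\mathcal{E}$. Your ``sanity check'' paragraph is essentially that argument. Both routes are short; yours avoids the intermediate rational functions $F_{P},G_{P}$ at the cost of tracking the $(a,b)$ pairs and Stirling values, while the paper's route is slightly cleaner here because all second-order partials of $1+x+y$ vanish, collapsing $G_{P}$ to a constant.
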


Before we prove this, note that $\Li{2}(1) = \zeta(2) = \pi^{2}/6$.  And because
\[
\Li{2}(1) + \Li{2}(\xi) + \Li{2}(\xi^{2}) = \sum_{n=1}^{\infty} \frac{(1^{n}+\xi^{n}+\xi^{2n})}{n^{2}}
= \sum_{m=1}^{\infty} \frac{3}{(3m)^{2}} = \frac{\zeta(2)}{3},
\]
it follows that $\Re\bigl(\Li{2}(\xi^{\pm 1})\bigr) -\pi^{2}/18$.  Therefore, the formula for $c_{2}(n)$ above simplifies to the one given in Boyd's \propref{prop:boydasymp}.

\begin{proof}
As in the proof of \cororef{cor:main}, the elements of $\mathcal{E}$ are $(\alpha,\beta) = (\xi^{-\sigma},\xi^{\sigma})$, with sign $\sigma$, for $\sigma=\pm 1$.  For $P=1+x+y$, using the notation of \propref{prop:earlycoeffsgeneral}, $F_{P} = -\xi^{\sigma}$.  Therefore, $\Im(F_{P}) = \Im(F_{P}^{2}) = -\sigma\sqrt{3}/2$, and $G_{P} = -(\xi^{\sigma} + \xi^{-\sigma}) = 1$.  The claims now follows easily from \propref{prop:earlycoeffsgeneral}.
\end{proof}


\section{Numerical Evidence}\label{sec:numerics}

Let $\sum_{r=2}^{\infty} c_{r}(n)/n^{r}$ be the a.p.p.s.\ expansion for $\Delta_{n}(1+x+y)$.  We have calculated the coefficients $c_{r}(n)$ to high precision for $2\le r\le 200$ and $n\in\{0,1,2\}$ (which covers all $n$, since $c_{r}(n)$ depends only on $n \jmod{3}$).  The values with $r\le 10$ are shown in Table~\ref{BoydCoeffTable}.  We have also calculated $\Delta_{n}(1+x+y)$ up to about $n=300$.


\begin{table}[btp]
  \centering
  \begin{tabular}{c r@{.}l r@{.}l r@{.}l}
     \thickmidrule
      $r$ & 
      	\multicolumn{2}{c}{$c_{r}(1)$} &
      	\multicolumn{2}{c}{$c_{r}(2)$} &
      	\multicolumn{2}{c}{$c_{r}(3)$} \\ \thickmidrule
      	2  &  
 $0$&3022998940  &  
$-0$&9068996821  &  
 $0$&3022998940  \\ \midrule
3  &  
$-0$&1850879776  &  
$-0$&9068996821  &  
 $0$&7896877657  \\ \midrule
4  &  
$-1$&3056972851  &  
 $1$&1934321459  &  
 $0$&1564663299  \\ \midrule

5  &  
 $2$&6830358119  &  
 $3$&2937639738  &  
$-5$&5860975105  \\ \midrule
6  &  
 $24$&5796866908  &
$-26$&3505959270  &
   $1$&4699140266  \\ \midrule
7  &  
 $-79$&4351285864  &  
 $-87$&7396475567  &  
 $165$&1438848791  \\ \midrule
8  &  
 $-953$&9471505649  &  
 $971$&0686842267  &  
 $-15$&3305415600  \\ \midrule
9  &  
 $4176$&6470338729  &  
 $4302$&0164745649  &  
 $-8460$&9088755602  \\ \midrule
10  &  
 $62932$&4226515189 &  
 $-63281$&1309582098  &  
 $331$&7360326211  \\ \thickmidrule
  \end{tabular}

  \caption{The coefficients $c_{r}(n)$ in the expansion for $\Delta_{n}(1+x+y)$.}
  \label{BoydCoeffTable}
\end{table}


\subsection{Confirmation of \cororef{cor:main}}

Let $p_{k}(n)\deq \sum_{r=2}^{k} c_{r}(n)/n^{r}$, the $k$-th partial sum of the expansion for $\Delta_{n}(1+x+y)$.  By the definition of the expansion,
\[
\Delta_{n}(1+x+y) = p_{k-1}(n) + \frac{c_{k}(n)}{n^{k}} + \Ok\Bigl(\frac{1}{n^{k+1}}\Bigr).
\]
Therefore, for any fixed $n_{0}\in\{0,1,2\}$,
\begin{equation}\label{eq:approxformula}
c_{k}(n_{0}) = \lim_{\substack{\\[-1 pt] n\to\infty, \\ n\equiv n_{0} \jmod{3}}}
	n^{k}\bigl(\Delta_{n}(1+x+y) - p_{k-1}(n)\bigr).
\end{equation}
With this formula, one may numerically verify each of the coefficients $c_{k}(n_{0})$ obtained from \cororef{cor:main}, assuming the accuracy of the coefficients with smaller $k$.  Table~\ref{BoydCoeffVerification} demonstrates this verification for $n_{0}=1$ and $k=2, 3, 4$.  Using $n\approx 300$ and $2\le k\le 30$, we found good agreement between the left and right sides of \eqref{eq:approxformula} (with the relative error frequently $< 0.05$ and always $< 0.17$), with the exception of $n_{0}=0$ and $k$ even, when convergence was much slower.  Using Richardson extrapolation dramatically improved convergence in all cases.  Overall, the data gives strong evidence for the validity of \cororef{cor:main}.  Numerical tests with other polynomials were also favorable.


\begin{table}[btp]
  \centering
  \begin{tabular}{l r@{.}l r@{.}l r@{.}l}
     \thickmidrule
      $n$ & 
      	\multicolumn{2}{c}{$n^{2} \Delta_{n}$} &
      	\multicolumn{2}{c}{$n^{3}\bigl(\Delta_{n} - \frac{c_{2}(n)}{n^{2}}\bigr)$} &
      	\multicolumn{2}{c}{$n^{4}\bigl(\Delta_{n} - \frac{c_{2}(n)}{n^{2}} 
			- \frac{c_{3}(n)}{n^{3}}\bigr)$} \\ \thickmidrule
1  &   $0$&3700812333  &   $0$&0677813393  &  
$0$&2528693169  \\ \midrule
61  &   $0$&2989282502  &  $-0$&2056702769  & 
$-1$&2555202582  \\ \midrule
121  &   $0$&3006826863  &  $-0$&1956821406  & 
$-1$&2818937239  \\ \midrule
181  &   $0$&3012379282  &  $-0$&1922158113  & 
$-1$&2901378960  \\ \midrule
241  &   $0$&3015096124  &  $-0$&1904578830  & 
$-1$&2941471883  \\ \midrule
301  &   $0$&3016706736  &  $-0$&1893953380  & 
$-1$&2965154617  \\ \thickmidrule
  \end{tabular}

  \caption{The scaled difference between $\Delta_{n} \deq \Delta_{n}(1+x+y)$ and the first few partial sums of its a.p.p.s.\ expansion, for selected values of $n\equiv 1 \jmod{3}$.
  The numbers in columns 2--4 approach the values of $c_{2}(1)$, $c_{3}(1)$, and $c_{4}(1)$ found in Table~\ref{BoydCoeffTable}.}
  \label{BoydCoeffVerification}
\end{table}


\subsection{Approximation of $\Delta_{n}(1+x+y)$ with partial sums}

Continuing to use $P=1+x+y$,
numerics strongly suggest that for each $n$, the power series $\sum_{r=2}^{\infty} c_{r}(n) x^{r}$ has radius of convergence 0, hence $\sum_{r=2}^{\infty} c_{r}(n)/n^{r}$ is divergent.  However, as is common with asymptotic expansions, partial sums give good approximations for $\Delta_{n}(1+x+y)$ up to a point, before going wildly off the mark.  Let $p_{k}(n)\deq \sum_{r=2}^{k} c_{r}(n)/n^{r}$, the $k$-th partial sum, and let $K(n)$ be the value of $k$ for which $p_{k}(n)$ gives the best approximation of $\Delta_{n}(1+x+y)$.  The numerical evidence suggests that as $n\to\infty$, $K(n)$ is asymptotic to $n$, and that the relative error $\bigl(p_{n}(n)-\Delta_{n}\bigr)/\Delta_{n} = \Osub{}\bigl(\exp(-n)\bigr)$.
For example, for $n=100$, the best approximation of $\Delta_{100}(1+x+y)$ is given by $p_{107}(100)$, which is correct to 46 decimal places---a relative error of $1.5\times 10^{-43}$.


\subsection{A polynomial not meeting the hypothesis}\label{subsec:deninger}

Our proof of \thmref{thm:main} does not make it clear what to expect for polynomials $P$ for which $P$ and $\Pyinline$ do have a common zero on $\UC^{2}$.  We chose such a polynomial---namely, $P = 1+ x + 1/x + y + 1/y$ (famous for Deninger's conjecture \cite{deninger}, recently proved by Rogers and Zudilin\cite{rogerszudilin}, that its Mahler measure is a value of an elliptic curve $L$-function), for which $P$ and $\Pyinline$ both vanish at $\bigl(\exp(\pm i \pi/3),-1\bigr)$.  We calculated $\Delta_{n}(P)$ for $1\le n\le 1600$.  Based on the numerical evidence, $\Delta_{n}(P)$ almost certainly has a main term of the form $c(n)/n^{3/2}$, where $c(n)$ depends only on $n\jmod{6}$.  This is consistent with the sort of bounds given by Boyd in \cite{boyd2}.  In fact, it appears that the difference $\Delta_{n}(P) - c(n)/n^{3/2}$ vanishes more quickly than $1/n^{r}$ for all $r>0$---that is, $\Delta_{n}(P) \quasi c(n)/n^{3/2}$---but the evidence is less clear for this.


\section{Algebraic determination of the signs $s(x,y)$}\label{sec:signdet}

As mentioned earlier, for a point $(\alpha,\beta)$ in $\mathcal{E}$, its sign $s(\alpha,\beta)$ can be determined algebraically, without reference to the root function $\rho(x)$ such that $\beta=\rho(\alpha)$.  Let $f(t)\deq \Log\bigl(\rho(\alpha \exp(it))/\beta\bigr)$, where $\Log$ is the principal branch of the logarithm.  The function $\Re\bigl(f(t)\bigr) =  \log\bigl|\rho(\alpha \exp(it))\bigr|$ experiences the same sign change at $0$ (with $t$ moving left to right in $\R$) as $\abs{\rho(x)}-1$ does at $\alpha$ (with $x$ moving counterclockwise along $\UC$).
We can determine arbitrarily many of the coefficients in the Maclaurin series for $f(t)$ with the use of \lemref{lem:implicitderivs}.  The first of these coefficients whose real part is nonzero will determine the behavior of $\Re\bigl(f(t)\bigr)$ at $t=0$, and hence determine $s(\alpha,\beta)$.

More specifically, say $f(t) = \sum_{k=1}^{\infty} b_{k} t^{k}$.  ($b_{0}=f(0)=0$.)  Let $N$ be the smallest positive integer such that $\Re(b_{N})\ne 0$.
Then
\[
s(\alpha, \beta) = \begin{cases}
0, 				& \text{if $N$ even,} \\
\sign(\Re(b_{N})),	& \text{if $N$ odd.}
\end{cases}
\]
In particular, $b_{1} = i \rho'(\alpha) = -i P_{x}/P_{y}$ (with $P_{x} = \partial P/\partial x$ and $P_{y} = \partial P/\partial y$ evaluated at $(\alpha,\beta)$).  Generically, $b_{1}$ will be nonzero.  Therefore, it will usually suffice to use the rule of thumb that $s(\alpha, \beta) = \sign(\Im(P_{x}/P_{y}))$, as long as the right side is nonzero.

Incidentally, for $k\ge 1$, it appears that
\[
b_{k} \stackrel{?}{=} \frac{i^{k}}{k!}\sum_{j=1}^{k} \stirtwo{k}{j} \rho^{(j)}(\alpha) \beta^{j-1},
\]
although we have not verified this identity.

\section{Conclusions}

It is unclear to what extent it is possible to remove the hypothesis on $P$ and $\Pyinline$ in \thmref{thm:main}.  If $Z(P)$ and $Z(\Pyinline)$ have a common solution in $\UC\times\C$, then a root function will have an algebraic singularity on $\UC$.  Our current approach would require us to integrate up to that singularity, but this causes havoc with our use of \lemref{lem:parts}.  We devoted extensive effort to circumventing this difficulty, for instance with Puiseux expansions and the method of stationary phase \cite[Section~2.9]{erdelyi}, but those attempts have not been fruitful.  The evidence from section~\ref{subsec:deninger} (including the fractional exponents, echoing those found in Puiseux expansions) might lead one to expect a.p.p.s.'s in powers of $n^{-1/M}$ for some positive integer $M$.
However, if similarly complicated formulas hold for the coefficients in such expansions, one would expect to have either all coefficients equal to 0 or infinitely many of them nonzero.  This is at odds with the evidence from the example in~\ref{subsec:deninger}.  This discrepency leads us to doubt that \thmref{thm:main} can be extended to all nonzero polynomials $P(x,y)$ without a significant modification of its statement.

\section{Acknowledgements}

I would like to thank Fernando Rodriguez Villegas for first suggesting this problem to me and for valuable suggestions.  I am also grateful to Rob Benedetto for providing extensive advice on an earlier draft of this paper, and to David Cox for helpful conversations.

I performed a great deal of computer experimentation in the course of this research.  I made extensive use of Sage \cite{sage471} and PARI/GP \cite{pari243} (both within Sage, and on its own).  I also used Mathematica \cite{mathematica7} for earlier work.


\bibliographystyle{elsarticle-num}
\bibliography{mybib}

\medskip

\end{document}